\theoremstyle{plain}
\newtheorem{theorem}{Theorem}[section]
\newtheorem{corollary}[theorem]{Corollary}
\newtheorem{prop}[theorem]{Proposition}
\newtheorem{proposition}[theorem]{Proposition}
\newtheorem{lemma}[theorem]{Lemma}
\theoremstyle{definition}
\newtheorem{example}[theorem]{Example}
\newtheorem{examples}[theorem]{Examples}
\newtheorem{definition}[theorem]{Definition}
\newtheorem{notation}[theorem]{Notation}
 \DeclareMathOperator{\re}{Re\,}
 \DeclareMathOperator{\e}{e}
 \DeclareMathOperator{\dist}{dist\,}
\newcommand{\C}{\mathbb{C}}
\newcommand{\R}{\mathbb{R}}
\newcommand{\N}{\mathbb{N}}
\newcommand{\inner}[1]{\ensuremath{\left\langle #1\right\rangle}}
\newcommand{\norm}[1]{\ensuremath{\lVert#1\rVert}}
\newcommand{\nor}[1]{\ensuremath{\left\|#1\right\|}}
\newcommand{\eps}{\varepsilon}
\newcommand{\nuu}{\textrm{nu}}
\renewcommand{\leq}{\leqslant}
\renewcommand{\geq}{\geqslant}
\begin{document}

\begin{center}
\small{ [ \emph{Abstract and Applied Analysis}, Volume 2014, Article ID 479208, 16 pages ] \linebreak http://dx.doi.org/10.1155/2014/479208 }
\end{center}

\title{On the Bishop-Phelps-Bollob\'{a}s property for numerical radius}

\author[Kim]{Sun Kwang Kim}
\address[Kim]{Department of Mathematics, Kyonggi University, Suwon 443-760, Republic of Korea}
\email{\texttt{lineksk@gmail.com}}

\author[Lee]{Han Ju Lee}
\address[Lee]{Department of Mathematics Education,
Dongguk University - Seoul, 100-715 Seoul, Republic of Korea}
\email{\texttt{hanjulee@dongguk.edu}}

\author[Mart\'{\i}n]{Miguel Mart\'{\i}n}
\address[Mart\'{\i}n]{Departamento de An\'{a}lisis Matem\'{a}tico,
Facultad de Ciencias,
Universidad de Granada,
E-18071 Granada, Spain}
\email{\texttt{mmartins@ugr.es}}

\subjclass[2000]{Primary 46B20; Secondary 46B04, 46B22}

\keywords{Banach space, approximation, numerical radius attaining operators, Bishop-Phelps-Bollob\'{a}s theorem.}

\thanks{The second author was supported by Basic Science Research Program through the National Research Foundation of Korea(NRF) funded by the Ministry of Education, Science and Technology (NRF-2012R1A1A1006869).
The third author was partially supported by Spanish MICINN and FEDER project no.~MTM2012-31755, and by Junta de Andaluc\'{\i}a and FEDER grants FQM-185 and P09-FQM-4911.}

\begin{abstract}
We study the Bishop-Phelps-Bollob\'as property for numerical radius (in short, BPBp-$\nuu$) and find sufficient conditions for Banach spaces to ensure the BPBp-$\nuu$. Among other results, we show that $L_1(\mu)$-spaces have this property for every measure $\mu$. On the other hand, we show that every infinite-dimensional separable Banach space can be renormed to fail the BPBp-$\nuu$. In particular, this shows that the Radon-Nikod\'{y}m property (even reflexivity) is not enough to get BPBp-$\nuu$.
\end{abstract}

\date{December 29th, 2013. Revised February 14th, 2014.}

\maketitle

\section{Introduction}

Let $X$ be a (real or complex) Banach space and $X^*$ be its dual space. The unit sphere of $X$ will be denoted by $S_X$. We write $\mathcal{L}(X)$ for the space of all bounded linear operators on $X$. For $T\in \mathcal{L}(X)$, its {\it numerical radius} is defined by
\[
v(T) =\sup \{ |x^*Tx| : (x, x^*) \in \Pi(X)\},
\]
where $\Pi(X) = \{ (x, x^*) \in S_X \times S_{X^*} : x^*(x)=1\}$. It is clear that $v$ is a semi-norm on $\mathcal{L}(X)$. We refer the reader to the monographs \cite{B-D1,B-D2} for background.
An operator $T\in \mathcal{L}(X)$ {\it attains its numerical radius} if there exists $(x_0, x_0^*)\in \Pi(X)$ such that $v(T) = |x_0^*Tx_0|$.

In this paper we will discuss on the density of numerical radius attaining operators, actually on an stronger property called Bishop-Phelps-Bollob\'{a}s property for numerical radius. Let us present first a short account on the known results about numerical radius attaining operators. Motivated by the study of norm attaining operators initiated by J. Lindenstrauss in the 1960's,  B.~Sims \cite{Sims} asked in 1972 whether the numerical radius attaining operators are dense in the space of all bounded linear operators on a Banach space. I.~Berg and B.~Sims \cite{BS1} gave a positive answer for uniformly convex spaces and C.~Cardassi showed that the answer is positive for $\ell_1$, $c_0$,  $C(K)$ (where $K$ is a metrizable compact), $L_1(\mu)$ and uniformly smooth spaces \cite{Card1, Card2, Card3}. M.~Acosta showed that the numerical radius attaining operators are dense in $C(K)$ for every compact Hausdorff space $K$ \cite{Acosta1}. M.~Acosta and R.~Pay\'{a} showed that numerical radius attaining operators are dense in $\mathcal{L}(X)$ if $X$ has the Radon-Nikod\'ym property \cite{AP}. On the other hand, R.~Pay\'{a} \cite{Pay} showed in 1992 that there is a Banach space $X$ such that the numerical radius attaining operators are not dense in $\mathcal{L}(X)$, which gave a negative answer to Sim's question. Some authors also paid attention to the study of  denseness of numerical radius attaining nonlinear mappings \cite{CK, ABR, AK, KL}.

Motivated by the work \cite{AAGM2} of M.~Acosta, R.~Aron, D.~Garc\'{\i}a and M.~Maestre on the Bishop-Phelps-Bollob\'as property for operators, A.~Guirao and O.~Kozhushkina \cite{GuiOle} introduced very recently the notion of Bishop-Phelps-Bollob\'as property for numerical radius.

\begin{definition}[\textrm{\cite{GuiOle}}]
A Banach space $X$ is said to have the \emph{Bishop-Phelps-Bollob\'as property for numerical radius} (in short, \emph{BPBp-$\nuu$}) if for every $0<\eps<1$, there exists $\eta(\eps)>0$ such that
whenever $T\in \mathcal{L}(X)$ and $(x, x^*)\in \Pi(X)$ satisfy  $v(T)=1$ and $|x^*Tx|>1-\eta(\eps)$, there exit $S\in \mathcal{L}(X)$ and $(y, y^*)\in \Pi(X)$ such that
\[
v(S) = |y^*Sy|=1, \ \ \ \|T-S\|<\eps, \ \ \ \|x-y\|<\eps,\ \ \text{and} \ \ \  \|x^*- y^*\|<\eps.
\]
\end{definition}

Notice that if a Banach space $X$ has the BPBp-$\nuu$, then the numerical radius attaining operators are dense in $\mathcal{L}(X)$. One of the main results of this paper is to show that the converse result is not longer true (section~\ref{sec:counterexamples})

It is shown in \cite{GuiOle} that the real or complex spaces $c_0$ and $\ell_1$ have the BPBp-$\nuu$. This result has been extended to the real space $L_1(\R)$ by J.~Falc\'{o} \cite{Falco}. A.~Avil\'{e}s, A.~J.~Guirao and J.~Rodr\'{\i}guez \cite{AviGuiRod} give sufficient conditions on a compact space $K$ for the real space $C(K)$ to have the BPBp-$\nuu$ which, in particular, include all metrizable compact spaces.

The content of this paper is the following. First, we introduce in section~\ref{sec:modulus} a modulus of the BPBp-$\nuu$ analogous to the one introduced in \cite{ACKLM} for the Bishop-Phelps-Bollob\'{a}s property for the operator norm, and we will use it as a tool in the rest of the paper. As easy applications, we prove that finite-dimensional spaces always have the BPBp-$\nuu$ and that a reflexive space has the BPBp-$\nuu$ if and only if its dual does.  Next, section~\ref{sec:weak-BPBp-nu} is devoted to prove that Banach spaces which are both uniformly convex and uniformly smooth satisfy a weaker version of the BPBp-$\nuu$ and to discuss such weaker version. In particular, it is shown that $L_p(\mu)$ spaces have the BPBp-$\nuu$ for every measure $\mu$ when $1<p<\infty$, $p\neq 2$. We show in section~\ref{sec:L1} that given any measure $\mu$, the real or complex space $L_1(\mu)$ has the BPBp-$\nuu$. Finally, we prove in section~\ref{sec:counterexamples} that every separable infinite-dimensional Banach space can be equivalently renormed to fail the BPBp-$\nuu$ (actually, to fail the weaker version). In particular, this shows that reflexivity (or even superreflexivity) is not enough for the BPBp-$\nuu$, while the Radon-Nikod\'{y}m property was known to be sufficient for the density of numerical radius attaining operators.

Let us introduce some notations for later use. The $n$-dimensional space with the $\ell_1$ norm is denoted by $\ell_1^{(n)}$.  Given a family $\{X_k\}_{k=1}^\infty$ of Banach spaces,
$\big[\bigoplus_{k=1}^\infty X_k\big]_{c_0}$ (resp. $\big[\bigoplus_{k=1}^\infty X_k\big]_{\ell_1}$) is the Banach space consisting of all sequences $(x_k)_{k=1}^\infty$ such that each $x_k$ is in $X_k$ and $\lim\limits_{k\to \infty} \|x_k\|=0$ (resp. $\sum_{k=1}^\infty \|x_k\|<\infty$) equipped with the norm $\|(x_k)_{k=1}^\infty\| = \sup_{k} \|x_k\|$ (resp.  $\|(x_k)_{k=1}^\infty\| =\sum_{k=1}^\infty \|x_k\|$).

\section{Modulus of the Bishop-Phelps-Bollob\'{a}s for numerical radius}\label{sec:modulus}

Analogously to what is done in \cite{ACKLM} for the BPBp for the operator norm, we introduce here a modulus to quantify the Bishop-Phelps-Bollob\'{a}s property for numerical radius.

\begin{notation}
Let $X$ be a Banach space. Consider the set
\[
\Pi_{\nuu}(X) = \bigl\{ (x, x^*, T) \,:\, (x, x^*)\in \Pi(X),\, T\in \mathcal{L}(X),\, v(T)=1=|x^*Tx|\bigr\},
\]
which is closed in $S_X\times S_{X^*}\times \mathcal{L}(X)$ with respect to the following metric
\[
\dist \bigl((x,x^*, T), (y,y^*, S)\bigr) = \max\bigl\{   \|x-y\|, \|x^*-y^*\|, \|T-S\|\bigr\}.
\]
The \emph{modulus of the Bishop-Phelps-Bollob\'{a}s property for numerical radius} is the function defined by
\[
\eta_{\nuu}(X)(\eps) = \inf \Bigl\{ 1-|x^*Tx| \,:\, (x,x^*)\in \Pi(X),\, T\in \mathcal{L}(X),\, v(T)=1,\, \dist\bigl((x,x^*, T), \Pi_{\nuu}(X)\bigr)\geq \eps\Bigr\}
\]
for every $\eps\in (0,1)$. Equivalently, $\eta_{\nuu}(X)(\eps)$ is the supremum of those $\eta>0$ such that whenever $T\in \mathcal{L}(X)$ and $(x,x^*)\in \Pi(X)$ satisfy
$v(T)=1$ and $|x^*Tx|>1-\eta$, there exist $S\in \mathcal{L}(X)$ and $(y,y^*)\in \Pi(X)$ such that
\[
v(S) = |y^*Sy|=1, \ \ \ \|T-S\|<\eps, \ \ \ \|x-y\|<\eps,\ \ \text{and} \ \ \  \|x^*- y^*\|<\eps.
\]
\end{notation}

It is immediate that a Banach space $X$ has the BPBp-$\nuu$ if and only if $\eta_{\nuu}(\eps)>0$ for every $0<\eps<1$. By construction, if a function $\eps \longmapsto \eta(\eps)$ is valid in the definition of the BPBp-$\nuu$, then $\eta_{\nuu}(\eps)\geq \eta(\eps)$.

An immediate consequence of the compactness of the unit ball of a finite-dimensional space is the following result. It was previously known to A.~Guirao (private communication).

\begin{proposition}
Let $X$ be a finite dimensional Banach space. Then $X$ has the Bishop-Phelps-Bollob\'as property for numerical radius.
\end{proposition}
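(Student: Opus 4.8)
The plan is to exploit the characterization recorded just before the statement, namely that $X$ has the BPBp-$\nuu$ if and only if $\eta_{\nuu}(X)(\eps)>0$ for every $\eps\in(0,1)$. Thus it suffices to prove this positivity, and I would do so by contradiction: suppose $\eta_{\nuu}(X)(\eps_0)=0$ for some $\eps_0\in(0,1)$. Since the modulus is defined as an infimum, this forces the existence of a sequence $(x_n,x_n^*,T_n)$ with $(x_n,x_n^*)\in\Pi(X)$, $v(T_n)=1$, and $\dist\bigl((x_n,x_n^*,T_n),\Pi_{\nuu}(X)\bigr)\geq\eps_0$ for all $n$, while $|x_n^*T_nx_n|\to 1$.

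The heart of the argument is a compactness extraction from the finite-dimensionality of $X$. Here $S_X$ and $S_{X^*}$ are compact; moreover $\mathcal{L}(X)$ is finite-dimensional and $v$ is in fact a norm on it equivalent to the operator norm (this classical fact from numerical range theory, see \cite{B-D1}, upgrades the seminorm observed in the introduction to a genuine norm), so the $v$-unit sphere $\{T\in\mathcal{L}(X):v(T)=1\}$ is compact as well. Hence $(x_n,x_n^*,T_n)$ has a subsequence converging, in the product metric $\dist$, to some $(x_0,x_0^*,T_0)$. Because $\Pi(X)$ is closed one gets $(x_0,x_0^*)\in\Pi(X)$; because $v$ is continuous, $v(T_0)=1$; and because the trilinear evaluation $(x,x^*,T)\mapsto x^*Tx$ is continuous, $|x_0^*T_0x_0|=\lim|x_n^*T_nx_n|=1$. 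Therefore $(x_0,x_0^*,T_0)\in\Pi_{\nuu}(X)$.

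Finally I would close with the $1$-Lipschitz continuity of the map $\dist(\,\cdot\,,\Pi_{\nuu}(X))$: along the convergent subsequence it gives $\dist\bigl((x_0,x_0^*,T_0),\Pi_{\nuu}(X)\bigr)=\lim\dist\bigl((x_n,x_n^*,T_n),\Pi_{\nuu}(X)\bigr)\geq\eps_0>0$, which is absurd since $(x_0,x_0^*,T_0)$ itself belongs to $\Pi_{\nuu}(X)$ and so has distance $0$ from it. This contradiction yields $\eta_{\nuu}(X)(\eps)>0$ for every $\eps\in(0,1)$, proving the proposition. The only ingredient that is not pure topology is the compactness of $\{T:v(T)=1\}$, which relies on $v$ being a norm rather than merely a seminorm (equivalently, $v(T)=0\Rightarrow T=0$); I expect this to be the sole step requiring an external reference, as every other step follows directly from compactness of the sphere in the finite-dimensional setting together with routine continuity of the maps involved.
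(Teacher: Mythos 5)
There is a genuine gap, and it sits precisely at the step you flagged: the claim that on a finite-dimensional space $v$ is a norm equivalent to the operator norm is false for real scalars, and the paper treats real and complex spaces alike. On $X=(\mathbb{R}^2,\|\cdot\|_2)$ the rotation $T$ by $\pi/2$ satisfies $x^*Tx=0$ for every $(x,x^*)\in\Pi(X)$, so $v(T)=0$ while $T\neq 0$; more generally, real Hilbert spaces of dimension at least two have numerical index $0$, as the paper itself recalls in Section~3. Consequently $K=\{S\in\mathcal{L}(X):v(S)=0\}$ may be a nontrivial subspace, and then $\{T\in\mathcal{L}(X):v(T)=1\}$ is invariant under translation by $K$, hence unbounded in the operator norm and certainly not compact: your sequence $(T_n)$ with $v(T_n)=1$ may satisfy $\|T_n\|\to\infty$, and the extraction of a $\dist$-convergent subsequence collapses. (The inequality $v(T)\geq \|T\|/\e$ from \cite{B-D1} holds only for complex spaces; for complex scalars your proof is correct as written, but the proposition is stated for general finite-dimensional $X$.)

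The paper's proof repairs exactly this point by passing to the quotient $\mathcal{L}(X)/K$, on which $v$ induces a genuine norm, equivalent by finite-dimensionality to the quotient norm; compactness there yields $[T_n]\to[T_0]$ with $v(T_0)=1$, i.e., there are $S_n\in K$ with $\|T_n-(T_0+S_n)\|\to 0$. The rest runs as in your argument, with one extra observation needed to land in $\Pi_{\nuu}(X)$: if $S\in K$ then $x^*Sx=0$ for every $(x,x^*)\in\Pi(X)$, so $x_0^*(T_0+S_n)x_0=x_0^*T_0x_0$ has modulus $1=v(T_0+S_n)$, whence $(x_0,x_0^*,T_0+S_n)\in\Pi_{\nuu}(X)$ for all $n$, while $\dist\bigl((x_n,x_n^*,T_n),(x_0,x_0^*,T_0+S_n)\bigr)\to 0$, giving the contradiction. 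Alternatively, you could salvage your scheme by noting that both $\Pi_{\nuu}(X)$ and the distance to it are unchanged when the operator is translated by an element of $K$, so each $T_n$ may be replaced by a representative with $\|T_n\|\leq C\,v(T_n)=C$ --- but that normalization is the quotient argument in disguise; without it, the compactness claim is simply wrong in the real case.
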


\begin{proof}
Let $K=\{ S\in \mathcal{L}(X) : v(S)=0\}$. Then $K$ is a norm-closed subspace of $\mathcal{L}(X)$. Hence $\mathcal{L}(X)/K$ is a finite-dimensional space with two norms
\begin{align*} v([T]) &:=\inf\{ v(T-S) : S\in K \} = v(T)\\
\|[T]\| &:= \inf\{ \| T-S \| : S\in K\},\end{align*}
 where $[T]$ is the class of $T$ in the quotient space $\mathcal{L}(X)/K$. Hence there is a constant $0<c\leq 1$ such that $c\|[T]\| \leq v(T)\leq \|[T]\|$.

Suppose that $X$ does not have the BPBp-$\nuu$. Then, there is $0<\eps<1$ such that $\eta_{\nuu}(X)(\eps)=0$. That is, there are  sequences $(x_n, x_n^*)\in \Pi(X)$ and $(T_n)\in \mathcal{L}(X)$ with $v(T_n)=1$ such that
\[
\dist\bigl((x_n, x_n^*, T_n), \Pi_{\nuu}(X)\bigr) \geq \eps \ \ (n\in \N)\qquad \text{and} \qquad \lim_{n} \bigl|x^*_n T_n x_n\bigr|=1.
\]
By compactness, we may assume that $\lim_{n} \|[T_n ] - [T_0] \|=0$ for some $T_0\in \mathcal{L}(X)$ and $v(T_0)=1$. Hence there exists a sequence $\{S_n\}_n$ in  $K$ such that $\lim_n \| T_n - (T_0 + S_n)\|=0$. Observe that $v(T_0+S_n)=v(T_0)=1$ for every $n\in \N$.

By compactness again, we may assume that $(x_n, x_n^*)$ converges to $(x_0, x^*_0)\in X\times X^*$. This implies that $(x_0,x_0^*)\in \Pi(X)$, and
$\bigl|x_0^*(T_0+S_n)x_0\bigr|=v(T_0+S_n)=1$, that is, $(x_0, x_0^*, T_0+S_n)\in \Pi_{\nuu}(X)$ for all $n$. This is a contradiction with the fact that
\begin{equation*}
0= \lim_{n}\dist\bigl( (x_n, x_n^*, T_n), (x_0, x_0^*, T_0+S_n) \bigr)\geq \lim_{n} \dist\bigl((x_n, x_n^*, T_n), \Pi_{\nuu}(X)\bigr) \geq \eps.\qedhere
\end{equation*}
\end{proof}

We may also give the following easy result concerning duality.

\begin{proposition}
Let $X$ be a reflexive space. Then $\eta_{\nuu}(X)(\eps)=\eta_{\nuu}(X^*)(\eps)$ for every $\eps\in (0,1)$. In particular, $X$ has the BPBp-$\nuu$ if and only if $X^*$ has the BPBp-$\nuu$.
\end{proposition}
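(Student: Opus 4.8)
The plan is to exploit the adjoint operation together with reflexivity in order to set up an isometric correspondence between the data defining $\eta_{\nuu}(X)$ and the data defining $\eta_{\nuu}(X^*)$. Throughout I would identify $X^{**}$ with $X$ via the canonical isometry, so that for $T\in\mathcal{L}(X)$ the adjoint $T^*\in\mathcal{L}(X^*)$ in turn satisfies $T^{**}=T\in\mathcal{L}(X)$. Recall that $T\mapsto T^*$ is a linear isometry for the operator norm, and that under reflexivity it is a bijection of $\mathcal{L}(X)$ onto $\mathcal{L}(X^*)$.

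First I would check that the adjoint preserves the numerical radius, i.e.\ $v(T^*)=v(T)$ for every $T$. Using reflexivity one has $\Pi(X^*)=\{(x^*,x) : (x,x^*)\in\Pi(X)\}$ after the identification $X^{**}=X$, and for such a pair $x^{**}=x$ the corresponding term is $x^{**}T^*x^*=(T^*x^*)(x)=x^*Tx$, whence $|x^{**}T^*x^*|=|x^*Tx|$. Taking suprema gives $v(T^*)=v(T)$; in particular $v(T)=1$ if and only if $v(T^*)=1$.

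Next I would define $\Phi(x,x^*,T)=(x^*,x,T^*)$. The computation above shows that $\Phi$ maps $\Pi_{\nuu}(X)$ bijectively onto $\Pi_{\nuu}(X^*)$, and that it is an isometry for the metric $\dist$: indeed $\|x-y\|$ and $\|x^*-y^*\|$ merely exchange roles, while $\|T^*-S^*\|=\|(T-S)^*\|=\|T-S\|$, so the three maxima coincide. Consequently, for any triple $(x,x^*,T)$ with $v(T)=1$ one obtains $\dist\bigl((x,x^*,T),\Pi_{\nuu}(X)\bigr)=\dist\bigl((x^*,x,T^*),\Pi_{\nuu}(X^*)\bigr)$, while $1-|x^*Tx|=1-|x^{**}T^*x^*|$.

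Finally I would conclude by comparing the two infima. The map $(x,x^*,T)\mapsto(x^*,x,T^*)$ is a bijection between the sets of triples over which $\eta_{\nuu}(X)(\eps)$ and $\eta_{\nuu}(X^*)(\eps)$ are computed: it preserves the constraint $v=1$, the distance constraint $\dist(\,\cdot\,,\Pi_{\nuu})\geq\eps$, and the quantity $1-|x^*Tx|$ being minimized. Hence the two infima agree, giving $\eta_{\nuu}(X)(\eps)=\eta_{\nuu}(X^*)(\eps)$, and the equivalence of the BPBp-$\nuu$ for $X$ and $X^*$ follows at once. I expect the only point requiring genuine care to be the reflexivity identifications, namely using the canonical embedding consistently so that $T^{**}=T$ and so that $\Pi(X^*)$ is really the ``flip'' of $\Pi(X)$; everything else reduces to the elementary fact that the adjoint is a norm isometry.
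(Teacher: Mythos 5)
Your proposal is correct and rests on exactly the same mechanism as the paper's proof: under reflexivity the adjoint $T\mapsto T^*$ is an isometric bijection of $\mathcal{L}(X)$ onto $\mathcal{L}(X^*)$ with $v(T^*)=v(T)$, and $\Pi(X^*)$ is the flip of $\Pi(X)$ once $X^{**}$ is identified with $X$. The only difference is one of packaging: you work directly with the infimum/distance definition of $\eta_{\nuu}$ and observe that the global isometry $(x,x^*,T)\mapsto(x^*,x,T^*)$ carries $\Pi_{\nuu}(X)$ onto $\Pi_{\nuu}(X^*)$ and preserves all the defining data, yielding the equality $\eta_{\nuu}(X)(\eps)=\eta_{\nuu}(X^*)(\eps)$ in one stroke, whereas the paper uses the equivalent ``supremum of valid $\eta$'' formulation, transports a near-attaining triple for $X^*$ through the adjoint to apply the property in $X$, proves the inequality $\eta_{\nuu}(X)(\eps)\leq\eta_{\nuu}(X^*)(\eps)$, and concludes by symmetry.
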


We will use that $v(T^*) = v(T)$ for all $T\in \mathcal{L}(X)$, where $T^*$ denotes the adjoint operator of $T$. This result can be found in \cite{B-D1}, but it is obvious if $X$ is reflexive.

\begin{proof}
By reflexivity, it is enough to show that $\eta_{\nuu}(X)(\eps)\leq \eta_{\nuu}(X^*)(\eps)$. Let $\eps\in (0,1)$ be fixed. If $\eta_{\nuu}(X)(\varepsilon)=0$, there is nothing to prove. Otherwise, consider $0<\eta<\eta_{\nuu}(X)(\varepsilon)$. Suppose that $T_1\in \mathcal{L}(X^*)$ and $(x^*_1,x_1)\in \Pi(X^*)$ satisfy
$$
v(T_1)=1 \quad \text{and} \quad |x_1T_1x^*_1|>v(T_1)-\eta.
$$
By considering $T_1^*\in \mathcal{L}(X)$, we may find $S_1\in \mathcal{L}(X)$ and $(y_1,y_1^*)\in \Pi(X)$ such that
$$
|y_1^*S_1y_1|=v(S_1)=1,\quad \|y_1-x_1\|<\varepsilon,\quad \|y^*_1-x^*_1\|<\varepsilon\quad \text{and} \quad \|T_1^*-S_1\|<\varepsilon.
$$
Then $S_1^*\in \mathcal{L}(X^*)$ and $(y_1^*,y_1)\in \Pi(X^*)$ satisfy
$$
|\inner{y_1,S_1^*y_1^*}| =v(S_1)=1,\quad \|y_1^*-x_1^*\|<\varepsilon,\quad \|y_1-x_1\|<\varepsilon\quad \text{and} \quad \|T_1-S_1^*\|<\varepsilon.
$$
This implies that $\eta_{\nuu}(X^*)(\eps)\geq \eta$. We finish by just taking supremum on $\eta$.
\end{proof}

We do not know whether the result above is valid in the non-reflexive case.

\section{Spaces which are both uniformly convex and uniformly smooth}\label{sec:weak-BPBp-nu}

For a Banach space which is both uniformly convex and uniformly smooth, we get a property which is weaker than BPBp-$\nuu$. This result was known to A. Guirao (private communication).

\begin{proposition}\label{unif-convex-smooth}
Let $X$ be a uniformly convex and uniformly smooth Banach space. Then, given $\eps>0$, there exists $\eta(\eps)>0$ such that whenever $T_0\in \mathcal{L}(X)$ with $v(T_0)=1$ and $(x_0, x^*_0)\in \Pi(X)$ satisfy
$|x^*_0T_0x_0|>1-\eta(\eps)$, there exist $S\in \mathcal{L}(X)$ and $(y, y^*)\in \Pi(X)$ such that
\[
v(S) = |y^*Sy|, \quad \|x-y\|<\eps, \quad \|x^*-y^*\|<\eps \quad \text{and} \quad \|S-T_0\|<\eps.
\]
\end{proposition}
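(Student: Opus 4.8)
The plan is to run a variational argument on $\Pi(X)$, exploiting uniform smoothness to parametrise it and uniform convexity to make the optimisation quantitative. First I would use that uniform smoothness of $X$ forces the norm to be (uniformly) Fr\'echet differentiable, so the duality map $J\colon S_X\to S_{X^*}$ assigning to $x$ the unique $x^*\in S_{X^*}$ with $x^*(x)=1$ is single-valued and uniformly continuous; consequently $\Pi(X)=\{(x,Jx):x\in S_X\}$ and $v(T)=\sup_{x\in S_X}|(Jx)(Tx)|$. Since the numerical radius is insensitive to unimodular rotations (the map $x\mapsto\lambda x$, $|\lambda|=1$, leaves $|(Jx)(Tx)|$ unchanged), I would pass to the quotient $S_X/\T$ by the circle group (by $\{\pm1\}$ in the real case), which is a complete metric space under $\tilde d([x],[x'])=\min_{|\lambda|=1}\norm{x-\lambda x'}$, and regard $g([x]):=|(Jx)(T_0x)|$ as a continuous function there with $\sup g=v(T_0)=1$ and $g([x_0])>1-\eta$, where $x_0^*=Jx_0$.

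Next I would apply Ekeland's variational principle to $1-g$ on $S_X/\T$, with gap $\eta$ and parameter $\lambda=\sqrt{\eta}$. This produces $[y]$ with $g([y])\geq g([x_0])$, $\tilde d([y],[x_0])\leq\sqrt{\eta}$, and the slope condition $g([x])\leq g([y])+\sqrt{\eta}\,\tilde d([x],[y])$ for every $x\in S_X$. Choosing the representative $y$ of $[y]$ closest to $x_0$ gives $\norm{y-x_0}\leq\sqrt{\eta}$, and setting $y^*:=Jy$ the uniform continuity of $J$ gives $\norm{y^*-x_0^*}\leq\omega_J(\sqrt{\eta})$; hence both fall below $\eps$ once $\eta$ is small. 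I would then promote $(y,y^*)$ to an attaining pair by the rank-one perturbation $S:=T_0+\rho\,\theta\,(y^*\otimes y)$, where $(y^*\otimes y)(z)=y^*(z)\,y$, $\rho\in(0,\eps)$, and the unimodular $\theta$ is chosen so that $\rho\theta$ is aligned in phase with $y^*T_0y$; then $\norm{S-T_0}=\rho<\eps$ and $|y^*Sy|=|y^*T_0y|+\rho$.

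The crux is to verify $v(S)=|y^*Sy|$, and this is where I expect the main difficulty. For a competitor $x\in S_X$ with $x^*=Jx$ one has $|x^*Sx|\leq g([x])+\rho\,|y^*(x)|\,|x^*(y)|$, so by the slope condition it would suffice to prove $\sqrt{\eta}\,\tilde d([x],[y])\leq\rho\bigl(1-|y^*(x)||x^*(y)|\bigr)$. Here uniform convexity enters decisively: it makes the slices $\{z\in B_X:\re\bigl(y^*(z)\bigr)>1-s\}$ of uniformly small diameter, and since $y$ lies in each such slice, $|y^*(x)|>1-s$ forces $\tilde d([x],[y])$ to be small (after absorbing the phase of $y^*(x)$, which is exactly what the quotient by $\T$ is for), yielding a modulus $\nu$ with $1-|y^*(x)||x^*(y)|\geq\nu(\tilde d([x],[y]))$. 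The obstacle is that the Ekeland bound is linear in $\tilde d$ whereas $\nu$ is only superlinear near $0$, so a single rank-one perturbation secures the estimate only away from the orbit of $y$ and therefore delivers merely approximate attainment. To reach the exact equality $v(S)=|y^*Sy|$ demanded by the statement I would iterate the construction, adding a summable sequence of rank-one perturbations of total norm below $\eps$ and using the uniform smallness of slices to show that the successive pairs $(x_n,x_n^*)$ are Cauchy with summable displacements; the limit operator and limit pair then provide exact numerical radius attainment with the required closeness, uniform convexity and uniform smoothness guaranteeing convergence throughout.
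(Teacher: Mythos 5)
Your proposal, in its operative final form, is the paper's proof: the scheme you reach in the last sentences --- iterate rank-one perturbations with geometrically summable norms and use uniform convexity and uniform smoothness to make the successive near-attaining pairs Cauchy --- is exactly the Berg--Sims/Lindenstrauss-type iteration the paper carries out. Your diagnosis of the one-shot attempt is also correct: since $\delta_X(t)=O(t^2)$ always (Nordlander's estimate), the linear Ekeland slope can never be absorbed by the slice modulus near the orbit of $y$, so no single perturbation closes the argument. But once you iterate, the Ekeland step is dead weight: the paper starts the iteration directly at $(x_0,x_0^*)$, which already nearly attains, and your slope condition at $[y]$ is never used in the iterative part; nothing in your final construction depends on it.

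The genuine gap is in the two lines where you assert that the successive pairs are ``Cauchy with summable displacements, using the uniform smallness of slices''. Slices alone do not give this; what does is a precise coupling between the perturbation sizes and the near-attainment tolerances, which you never specify. In the paper, with $\eta(t)=\frac{t}{4}\min\{\delta_X(t/4),\delta_{X^*}(t/4)\}$ and $\rho_n=\eps^n/4^n$, one sets $T_{n+1}=T_n+\lambda_{n+1}\rho_{n+1}\,x_n^*(\cdot)x_n$ and chooses $(x_{n+1},x_{n+1}^*)$ attaining $v(T_{n+1})$ within the \emph{next} tolerance $\eta(\rho_{n+2})$. The lower bound $v(T_{n+1})\geq |x_n^*T_{n+1}x_n|=|x_n^*T_nx_n|+\rho_{n+1}\geq v(T_n)+\rho_{n+1}-\eta(\rho_{n+1})$ (possible by the phase choice of $\lambda_{n+1}$) played against the upper bound $|x_{n+1}^*T_{n+1}x_{n+1}|\leq v(T_n)+\rho_{n+1}\,x_{n+1}^*(x_n)$ forces $x_{n+1}^*(x_n)\geq 1-2\rho_{n+1}^{-1}\eta(\rho_{n+1})$, and only because $\rho_{n+1}^{-1}\eta(\rho_{n+1})$ is a convexity modulus evaluated at the geometrically small argument $\rho_{n+1}/4$ do the midpoint estimates in $X$ and in $X^*$ yield $\norm{x_n-x_{n+1}}\leq \eps^{n+1}/4^{n+2}$ and $\norm{x_n^*-x_{n+1}^*}\leq \eps^{n+1}/4^{n+2}$, i.e.\ summability. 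If the tolerance at step $n+1$ is merely ``small'' rather than $o(\rho_{n+1})$ on this exact schedule, consecutive near-attaining pairs can drift apart and your Cauchy claim is unsupported; this coupling is the actual content of the proof, not a routine detail to be filled in. Two minor points: deriving $\norm{x_n^*-x_{n+1}^*}$ from uniform continuity of the duality map $J$ instead of the paper's separate $\delta_{X^*}$ midpoint argument is legitimate (uniform continuity sends Cauchy sequences to Cauchy sequences, and the final bound $\norm{y^*-x_0^*}<\eps$ follows by shrinking $\eta$ against the modulus of $J$); and at the limit you only get $|x_\infty^*(x_\infty)|=1$, so, as in the paper, you must rotate by the phase $\alpha=x_\infty^*(x_\infty)$ to produce a pair lying in $\Pi(X)$.
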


\begin{proof}
Notice that the uniform smoothness of $X$ is equivalent to the uniform convexity of $X^*$. Let $\delta_X(\eps)$ and $\delta_{X^*}(\eps)$ be the moduli of convexity $X$ and $X^*$, respectively. Given $0<\eps<1$, consider
$$
\eta(\eps) = \frac{\eps}{4}\min\left\{\delta_X\left(\frac{\eps}{4}\right), \delta_{X^*}\left(\frac{\eps}{4}\right)\right\}>0.
$$
Consider
$T_0\in \mathcal{L}(X)$ with $v(T_0)=1$ and $(x_0, x^*_0)\in \Pi(X)$ satisfying $|x^*_0T_0x_0|>1-\eta(\eps)$. Define $T_1\in \mathcal{L}(X)$ by $T_1x = T_0x +\lambda_1 \frac{\eps}{4}x_0^*(x)x_0$ for all $x\in X$, where $\lambda_1$ is the scalar satisfying $|\lambda_1|=1$ and $|x_0^*T_0x_0 + \lambda_1 \frac{\eps}4 | = |x_0^*T_0x_0| + \frac{\eps}4$.
Now, choose $x_1\in S_X$ and $x_1^*\in S_{X^*}$ such that  $|x_1^*(x_1)|=1$, $x_1^*(x_0) = |x_1^*(x_0)|$ and
\[ |x_1^*T_1x_1| \geq v(T_1) - \eta(\frac{\eps^2}{4^2}).\]
Now we define a sequence $(x_n, x_n^*, T_n)$ in
$S_X\times S_{X^*}\times \mathcal{L}(X)$ inductively. Indeed, suppose that we have a defined sequence $(x_j, x_j^*, T_j)$ for $0\leq j\leq n$ and let
\[ T_{n+1} x = T_n x + \lambda_{n+1} \frac{\eps^{n+1}}{4^{n+1}} x_n^*(x)x_n.\] Then choose $x_{n+1} \in S_X$ and $x_{n+1}^*\in S_{X^*}$ such that $|x_{n+1}^*(x_{n+1})| = 1$ and $|x_{n+1}^*(x_n)| = x_{n+1}^*(x_n)$
\[
\bigl|x_{n+1}^*T_{n+1} x_{n+1}\bigr| \geq v(T_{n+1}) - \eta\left(\frac{\eps^{n+2}}{4^{n+2}}\right).
\]
Notice that for all $n\geq 0$, we have
\[ \|T_{n+1} - T_n\| \leq \frac{\eps^{n+1}}{4^{n+1}} \ \ \ \text{and} \ \ \ |v(T_{n+1}) -v(T_n) | \leq\frac{\eps^{n+1}}{4^{n+1}}.\] This implies that $(T_n)$ is a Cauchy sequence and assume that it converges to $S\in \mathcal{L}(X)$. Then we have
\[\lim_n T_n = S,~~ \|T_0 - S\|<\eps~~\text{and}~~\lim_n |x_n^*T_n x_n | = \lim_n v(T_n) = v(S).\]
We will show that both sequences $(x_n)$ and $(x_n^*)$ are Cauchy.
From the definition, we have
\begin{align*}
 v(T_{n+1}) - \eta\left(\frac{\eps^{n+2}}{4^{n+2}}\right) &\leq |x^*_{n+1}T_{n+1}x_{n+1}|\\
 &\leq \left|x_{n+1}^* T_n x_{n+1} + \lambda_{n+1}\frac{\eps^{n+1}}{4^{n+1}} x_n^*(x_{n+1})x_{n+1}^*(x_n)\right|\\
 &\leq v(T_n) + \frac{\eps^{n+1}}{4^{n+1}}x_{n+1}^*(x_n)
\end{align*}
and
\begin{align*}
v(T_{n+1}) &\geq |x_n^*T_{n+1}x_n| = \left|x_n^*T_nx_n + \lambda_{n+1}\frac{\eps^{n+1}}{4^{n+1}}\right|\\
 & =  |x_n^*T_nx_n | + \frac{\eps^{n+1}}{4^{n+1}} \geq v(T_n) - \eta\left(\frac{\eps^{n+1}}{4^{n+1}}\right) + \frac{\eps^{n+1}}{4^{n+1}}.
\end{align*}
In summary,  we have
\[ v(T_n) + \frac{\eps^{n+1}}{4^{n+1}}x_{n+1}^*(x_n) \geq  v(T_n) - \eta\left(\frac{\eps^{n+1}}{4^{n+1}}\right) + \frac{\eps^{n+1}}{4^{n+1}}- \eta\left(\frac{\eps^{n+2}}{4^{n+2}}\right).\]
Hence
\[
x_{n+1}^*(x_n) \geq 1- 2 \frac{4^{n+1}}{\eps^{n+1}}\eta\left(\frac{\eps^{n+1}}{4^{n+1}}\right) = 1-\frac12\min\left\{\delta_X\left( \frac{\eps^{n+1}}{4^{n+2}}\right), \delta_{X^*}\left(\frac{\eps^{n+1}}{4^{n+2}}\right)\right\}
\]
and
\begin{align*}
\left\|\frac{x_n+x_{n+1}}2\right\|&\geq  x_{n+1}^*\left(\frac{x_n+x_{n+1}}2\right)\geq 1- \delta_X\left( \frac{\eps^{n+1}}{4^{n+2}}\right),
\\ \left\|\frac{x_n^*+x_{n+1}^*}2\right\|&\geq  \frac{x_n^*+x_{n+1}^*}2(x_n)\geq 1- \delta_{X^*}\left(\frac{\eps^{n+1}}{4^{n+2}}\right).
\end{align*}
This means that $\|x_n - x_{n+1} \| \leq \frac{\eps^{n+1}}{4^{n+2}}$  and  $\|x_n^* - x_{n+1}^* \| \leq \frac{\eps^{n+1}}{4^{n+2}}$ for all $n$. So $(x_n)$  and $(x_n^*)$ are Cauchy. Let $x_\infty= \lim_n x_n$ and $x^*_\infty = \lim_n x_n^*$. Then we have $\|x_0-x_\infty\|<\frac{\eps}4$ and $\|x^*_0-x^*_\infty\|<\frac{\eps}4$. Hence, $|x_\infty^*(x_\infty)| = \lim_n |x_n^*(x_n)|=1$ and
\[ v(S) = \lim_n v(T_n) = \lim_n |x_n^*T_n x_n| =  |x^*_\infty Sx_\infty|.\]
Let $\alpha = x_\infty^*(x_\infty)$, $y^* = \bar{\alpha}x^*_\infty$ and $y=x_\infty$. Then we have $y^*(y)=1$, $v(S) = |y^*Sy|$ and $\|y-x_0\|<\eps$. Notice that
\[ |\alpha -1 | = |x_\infty^*(x_\infty) - x_0^*(x_0)| \leq |(x_\infty^*-x_0^*)(x_\infty)| + |x_0^*(x_\infty) - x_0^*(x_0)| <\frac{\eps}2.
\]
Therefore
\[\|y^* - x^* \| \leq \|\bar\alpha y^* - y^*\| + \|y^*-x^*\| < \frac{\eps}2+\frac{\eps}4<\eps.\]
This completes the proof.
\end{proof}

Let us discuss a little bit about the equivalence between the property in the result above and the BPBp-$\nuu$. For convenience, let us introduce the following definition.

\begin{definition}
A Banach space $X$ has the \emph{weak Bishop-Phelps-Bollob\'{a}s property for the numerical radius} (in short \emph{weak-BPBp-$\nuu$}) if given $\eps>0$, there exists $\eta(\eps)>0$ such that whenever $T_0\in \mathcal{L}(X)$ with $v(T_0)=1$ and $(x_0, x^*_0)\in \Pi(X)$ satisfy
$|x^*_0T_0x_0|>1-\eta(\eps)$, there exist $S\in \mathcal{L}(X)$ and $(y, y^*)\in \Pi(X)$ such that
\[
v(S) = |y^*Sy|, \quad \|x-y\|<\eps, \quad \|x^*-y^*\|<\eps \quad \text{and} \quad \|S-T\|<\eps.
\]
\end{definition}

Notice that the only difference between this concept and the BPBp-$\nuu$ is the normalization of the operator $S$ by the numerical radius. Of course, if the numerical radius and the operator norm are equivalent, this two properties are the same. This equivalence is measured by the so-called numerical index of the Banach space, as follows.
For a Banach space $X$, the {\it numerical index} of $X$ is defined by
\[
n(X) = \inf \{ v(T) \,:\, T\in \mathcal{L}(X), \|T\|=1\}.
\]
It is clear that $0\leq n(X)\leq 1$ and $n(X)\|T\|\leq v(T) \leq \|T\|$ for all $T\in \mathcal{L}(X)$. The value $n(X)=1$ means that $v$ equals the usual operator norm. This is the case of $X=L_1(\mu)$ and $X=C(K)$, among many others. On the other hand, $n(X)>0$ if and only if the  numerical radius is equivalent to the norm of $\mathcal{L}(X)$. We refer the reader to \cite{KMP} for more information and background.

The following result is immediate. We include a proof for the sake of completeness.

\begin{proposition}\label{prop-nxpositive}
Let $X$ be a Banach space with $n(X)>0$. Then, $X$ has the BPBp-$\nuu$ if and only if $X$ has the weak-BPBp-$\nuu$.
\end{proposition}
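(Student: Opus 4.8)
The plan is to prove the two implications separately, since the forward direction is essentially trivial while all the content lies in using $n(X)>0$ to control a renormalization in the converse.

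First I would observe that BPBp-$\nuu$ immediately implies weak-BPBp-$\nuu$: the conclusion of the former, $v(S)=|y^*Sy|=1$, already contains the conclusion $v(S)=|y^*Sy|$ required by the latter, so the same modulus $\eta(\cdot)$ works verbatim and no hypothesis on $n(X)$ is needed here.

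For the converse, suppose $X$ has the weak-BPBp-$\nuu$ with modulus $\eta_0(\cdot)$. Given $\eps\in(0,1)$, I would fix an auxiliary parameter $\eps'>0$ (to be specified) with $\eps'<\min\{1,\eps\}$ and declare the candidate modulus $\eta(\eps):=\eta_0(\eps')$. Starting from $T_0$ with $v(T_0)=1$ and $(x_0,x_0^*)\in\Pi(X)$ with $|x_0^*T_0x_0|>1-\eta(\eps)$, the weak property produces $S_0\in\mathcal{L}(X)$ and $(y,y^*)\in\Pi(X)$ with $v(S_0)=|y^*S_0y|$ and $\max\{\|x_0-y\|,\|x_0^*-y^*\|,\|S_0-T_0\|\}<\eps'$. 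The natural fix is to renormalize: set $S:=S_0/v(S_0)$, which satisfies $v(S)=1=|y^*Sy|$, so that $(y,y^*,S)\in\Pi_{\nuu}(X)$ exactly. Since $\eps'<\eps$, the estimates $\|x_0-y\|<\eps$ and $\|x_0^*-y^*\|<\eps$ are already in place.

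The only real work is to control $\|S-T_0\|\leq\|S-S_0\|+\|S_0-T_0\|$, and this is where $n(X)>0$ enters. I would write $\|S-S_0\|=\frac{|1-v(S_0)|}{v(S_0)}\,\|S_0\|$ and then bound each factor: the inequality $|v(S_0)-v(T_0)|\leq v(S_0-T_0)\leq\|S_0-T_0\|<\eps'$ together with $v(T_0)=1$ gives both $|1-v(S_0)|<\eps'$ and $v(S_0)>1-\eps'>0$, while $n(X)\|T_0\|\leq v(T_0)=1$ yields $\|S_0\|\leq\|T_0\|+\eps'\leq\frac{1}{n(X)}+\eps'$. Hence $\|S-S_0\|\leq\frac{\eps'}{1-\eps'}\bigl(\frac{1}{n(X)}+\eps'\bigr)$, which tends to $0$ as $\eps'\to0^+$ precisely because $n(X)>0$ keeps $\|S_0\|$ bounded. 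Choosing $\eps'$ small enough that $\frac{\eps'}{1-\eps'}\bigl(\frac{1}{n(X)}+\eps'\bigr)+\eps'<\eps$ then gives $\|S-T_0\|<\eps$, completing the proof. The main (and quite mild) obstacle is simply keeping this renormalization error under control; the boundedness $\|T_0\|\leq 1/n(X)$ forced by $n(X)>0$ is exactly the ingredient that makes it possible, since without it $\|S_0\|$ could blow up and dividing by $v(S_0)$ could move $S$ arbitrarily far from $T_0$.
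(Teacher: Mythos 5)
Your proposal is correct and follows essentially the same route as the paper: renormalize $S_0$ to $S=S_0/v(S_0)$ and use $n(X)>0$ to bound the renormalization error, with only a cosmetic difference (you bound $\|S_0\|\leq \|T_0\|+\eps'\leq \frac{1}{n(X)}+\eps'$ and $v(S_0)>1-\eps'$ separately, while the paper applies $n(X)\|S_0\|\leq v(S_0)$ directly to get $\|S_0\|/v(S_0)\leq 1/n(X)$, yielding the cleaner bound $\|S-T_0\|\leq \bigl(\frac{1}{n(X)}+1\bigr)\|S_0-T_0\|$). Your explicit choice of $\eps'$ just carries out the ``obvious change of parameters'' the paper leaves to the reader.
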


\begin{proof}
The necessity is clear. For the converse, assume that we have $\eta(\eps)>0$ satisfying the conditions of the weak-BPBp-$\nuu$ for all $0<\eps<1$.
If $T\in \mathcal{L}(X)$ with $v(T)=1$ and $(x_0, x^*_0)\in \Pi(X)$ satisfy
$|x^*_0Tx_0|>1-\eta(\eps)$ for $0<\eps<1$, then there exist $S\in \mathcal{L}(X)$ and $(y, y^*)\in \Pi(X)$ such that
\[ v(S) = |y^*Sy|, \ \ \  \|S-T\|<\eps, \ \ \ \|x-y\|<\eps~~~ \text{and}~~~\|x^*-y^*\|<\eps.\]
As $v(S)>0$ by the above, let $S_1 = \frac{1}{v(S)}S$. Then we have
\[  1=v(S_1)= |y^*S_1 y|,~~~~\|x-y\|<\eps~~~ \text{and}~~~\|x^*-y^*\|<\eps.\]
Finally, we have
\begin{align*}
 \|S_1 - T\| &\leq \left\|\frac{1}{v(S)} S - S\right\| + \|S-T\| = \frac{\|S\|}{v(S)} |v(S) - 1| + \|S-T\|\\
&\leq\frac{1}{n(X)} |v(S)- v(T)| + \|S-T\|\\
&\leq \left( \frac{1}{n(X)}+1\right) \|S-T\| < \frac{n(X)+1}{n(X)} \eps.
\end{align*}
An obvious change of parameters finishes the proof.
\end{proof}

We do not know whether the hypothesis of $n(X)>0$ can be omitted in the above result.

Putting together Propositions \ref{unif-convex-smooth} and \ref{prop-nxpositive}, we get the following.

\begin{corollary}
Let $X$ be a uniformly convex and uniformly smooth Banach space with $n(X)>0$. Then $X$ has the BPBp-$\nuu$.
\end{corollary}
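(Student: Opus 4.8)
The plan is to combine the two preceding results directly. The final statement—that a uniformly convex and uniformly smooth Banach space $X$ with $n(X)>0$ has the BPBp-$\nuu$—should follow immediately from Proposition~\ref{unif-convex-smooth} and Proposition~\ref{prop-nxpositive}, as the surrounding text explicitly indicates. First I would observe that Proposition~\ref{unif-convex-smooth} shows that any uniformly convex and uniformly smooth space satisfies the conclusion of the weak-BPBp-$\nuu$: for every $\eps>0$ there is $\eta(\eps)>0$ so that an approximate numerical-radius-attaining pair $(T_0,x_0,x_0^*)$ with $|x_0^*T_0x_0|>1-\eta(\eps)$ can be perturbed to an exact one $(S,y,y^*)$ with $v(S)=|y^*Sy|$ and the three quantities $\|S-T_0\|$, $\|x_0-y\|$, $\|x_0^*-y^*\|$ all below $\eps$. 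This is precisely the defining condition of the weak-BPBp-$\nuu$, so $X$ has that weaker property.

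Second, I would invoke the hypothesis $n(X)>0$ together with Proposition~\ref{prop-nxpositive}, which asserts that under $n(X)>0$ the weak-BPBp-$\nuu$ and the full BPBp-$\nuu$ are equivalent. Since $X$ has the weak-BPBp-$\nuu$ by the first step and $n(X)>0$ by assumption, the proposition upgrades this to the full BPBp-$\nuu$, which is exactly the desired conclusion.

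There is essentially no obstacle here: the corollary is a formal consequence of two already-established propositions, and the role of the assumption $n(X)>0$ is exactly to bridge the gap between the weak property (which uniform convexity and uniform smoothness alone deliver via Proposition~\ref{unif-convex-smooth}) and the genuine BPBp-$\nuu$. The only point worth a word of care is matching the quantifiers: Proposition~\ref{unif-convex-smooth} produces the weak-BPBp-$\nuu$ with a concrete modulus, and Proposition~\ref{prop-nxpositive} only requires that such a modulus exist for every $0<\eps<1$, so the two statements dovetail without any reparametrization beyond the harmless change of variables already absorbed into the proof of Proposition~\ref{prop-nxpositive}. Thus the entire argument reduces to the single sentence that the two propositions combine to give the result.
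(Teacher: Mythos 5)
Your proposal is correct and coincides with the paper's own argument: the authors derive this corollary in one line by ``putting together Propositions~\ref{unif-convex-smooth} and \ref{prop-nxpositive},'' exactly as you do. Your remark about matching the quantifiers is also accurate and requires no further work, since the reparametrization is already handled inside the proof of Proposition~\ref{prop-nxpositive}.
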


Let us comment that every complex Banach space $X$ satisfies $n(X)\geq 1/\e$, so the above corollary automatically applies in the complex case. In the real case, this is not longer true, as the numerical index of a Hilbert space of dimension greater than or equal to two is $0$. On the other hand, it is proved in \cite{MMP-Israel} that real $L_p(\mu)$ spaces have non-zero numerical index for every measure $\mu$ when $p\neq 2$. Therefore, we have the following examples.

\begin{examples}$ $
\begin{enumerate}
\item[(a)] Complex Banach spaces which are uniformly smooth and uniformly convex satisfy the BPBp-$\nuu$.
\item[(b)] In particular, for every measure $\mu$, the complex spaces $L_p(\mu)$ have the BPBp-$\nuu$ for $1<p<\infty$.
\item[(c)] For every measure $\mu$, the real spaces $L_p(\mu)$ have the BPBp-$\nuu$ for $1<p<\infty$, $p\neq 2$.
\end{enumerate}
\end{examples}

\textbf{Note added in revision:}  Very recently, H.~J.~Lee, M.~Mart\'in and J.~Mer\'i have proved that Proposition~\ref{prop-nxpositive} can be extended to some Banach spaces with numerical index zero as, for instance, real Hilbert spaces. Hence, they have shown that Hilbert spaces have the BPBp-$\nuu$. These results will appear elsewhere.

\section{$L_1$ spaces}\label{sec:L1}

In this section, we will show that $L_1(\mu)$ has the BPBp-$\nuu$ for every measure $\mu$. In the proof, we are dealing with complex integrable functions since the real case is followed easily by applying the same proof. Our main result here is the following.

\begin{theorem}\label{thr:L_1-arbitrary}
Let $\mu$ be a measure. Then $L_1(\mu)$ has the Bishop-Phelps-Bollob\'{a}s property for numerical radius. More precisely, given $\eps>0$, there exists $\eta(\eps)>0$ (which does not depend on $\mu$) such that whenever $T_0\in\mathcal{L}(L_1(\mu))$ with $v(T_0)=1$ and $(f_0, g_0) \in \Pi(L_1(\mu))$ satisfy $|\inner{T_0f_0, g_0}|>1-\eta(\eps)$, then there exist $T\in \mathcal{L}(L_1(\mu))$, $(f_1, g_1) \in \Pi(L_1(\mu))$ such that
\[ |\inner{Tf_1, g_1}|=v(T)=1,\ \  \|f_0-f_1\|< \eps, \ \ \|g_0-g_1\|< \eps\ \ \text{and}\ \ \|T-T_0\|< \eps.\]
\end{theorem}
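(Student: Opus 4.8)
The plan is to lean on two special features of $L_1(\mu)$. First, $n(L_1(\mu))=1$, so $v(T)=\norm{T}$ for every $T$ and the whole problem becomes one of \emph{norm} attainment subject to the constraint that, for a pair $(f_1,g_1)\in\Pi(L_1(\mu))$, the functional $g_1$ must coincide with the sign of $f_1$ on $\mathrm{supp}\,f_1$. Second, the action of an operator on $L_1$ disintegrates over $\mathrm{supp}\,f_0$, so that $\norm{Tf_0}\le\int\abs{f_0(t)}\,\norm{(\text{local action at }t)}\,d\mu\le\norm{T}$, with equality precisely when (i) the local action has maximal norm $\norm{T}$ a.e.\ on $\mathrm{supp}\,f_0$ and (ii) the local outputs, weighted by $f_0$, do not cancel in the range $L_1(\mu)$. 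The hypothesis $\abs{\inner{T_0f_0,g_0}}>1-\eta$ will force (i) and (ii) to hold approximately, and the entire proof consists in upgrading them to exact equalities by a controlled surgery, with all bounds expressed through $\eta$ and $\eps$ alone so that $\eta(\eps)$ does not depend on $\mu$.

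First I would rotate so that $\inner{T_0f_0,g_0}>0$ and record the governing \emph{deficit} estimate. Writing $A=\mathrm{supp}\,f_0$ and using that $g_0$ equals the sign of $f_0$ on $A$ while $\abs{g_0}\le1$ everywhere, one has
\[
\norm{T_0f_0}-\re\inner{T_0f_0,g_0}=\int\abs{T_0f_0}\,\bigl(1-\re\psi\bigr)\,d\mu<\eta,
\]
where $\psi$ is the pointwise product of $\sign(T_0f_0)$ with $g_0$, so that $\re\psi\le\abs{g_0}\le1$ and the integrand is nonnegative. This single inequality controls everything: on $\{\abs{g_0}\le1-\eps\}$ the factor $1-\re\psi\ge\eps$ forces $\int\abs{T_0f_0}<\eta/\eps$, so the mass of $T_0f_0$ there is negligible once $\eta\ll\eps^{2}$; and wherever $\abs{g_0}$ is close to $1$ the phase of $g_0$ is close to that of $\sign(T_0f_0)$ in the same weighted-integral sense. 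I would also localise to the $\sigma$-finite band carrying $f_0$ and $T_0f_0$ and reduce $f_0$ to a simple function, so that the disintegration is available and only finitely many local pieces are in play.

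Next I would produce the target output and the operator. Let $G=\{\abs{g_0}=1\}$ and define $h$ by truncating $T_0f_0$ to $G$ (discarding the negligible mass on $\{\abs{g_0}<1\}$) and rotating its phase on $G$ so that $h\,g_0=\abs h$; then $\norm{h-T_0f_0}=O(\eta/\eps)$ and $\mathrm{supp}\,h\subseteq G$. Put $T_1=T_0+(h-T_0f_0)\otimes g_0$, so that $T_1f_0=g_0(f_0)\,(h-T_0f_0)+T_0f_0=h$ and $\norm{T_1-T_0}=\norm{h-T_0f_0}\norm{g_0}=O(\eta/\eps)$. With this choice $g_0$ already norms $T_1f_0$, since $\inner{T_1f_0,g_0}=\int\abs h=\norm{h}$, so the candidate pair is simply $(f_0,g_0)$ itself and the distances $\norm{f_1-f_0}$, $\norm{g_1-g_0}$ cost nothing so far. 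The only thing still missing is the \emph{exact} equality $\norm{T_1f_0}=\norm{T_1}$; note that the gap $\norm{T_1}-\norm{T_1f_0}$ is only $O(\eta/\eps)$, since $\norm{T_1}\le1+O(\eta/\eps)$ while $\norm{T_1f_0}=\norm h\ge1-O(\eta/\eps)$.

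The main obstacle is precisely this last step: closing that small gap to achieve genuine norm (hence numerical radius) attainment at $f_0$ \emph{without} enlarging $\norm{T_1}$ and without destroying the alignment already arranged. Here I would work with the disintegration of $T_1$ and perform the surgery dictated by the fact that the norm of an operator on $L_1$ is the essential supremum of the local (''column'') norms: I boost every sub-maximal local piece over $\mathrm{supp}\,f_0$ up to norm $\norm{T_1}$ and realign it so that the weighted outputs no longer cancel in $L_1(\mu)$. Raising sub-maximal pieces to the maximum cannot increase the essential supremum, so $\norm{T_1}$ is preserved, while both the amount of boosting and the amount of realignment are small in $L_1$-average, exactly by the two halves of the deficit estimate (the sub-maximality defect and the cancellation defect together equal the gap). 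The genuinely delicate points, where the real work lies, are that the realignment must kill cancellation in the range rather than merely equalise norms, that it must keep $\mathrm{supp}(Tf_1)\subseteq G$ so that a functional $g_1$ within $O(\eta/\eps)$ of $g_0$ still norms $Tf_1$, and that every bound be $\mu$-free. After this surgery the norming vector and functional have moved by $O(\eta/\eps)$, giving $f_1\approx f_0$ and $g_1\approx g_0$; finally I rescale $T$ to normalise $v(T)=\norm{T}=1$, which preserves the norming vector, and read off the pair $(f_1,g_1)\in\Pi(L_1(\mu))$ with $\abs{\inner{Tf_1,g_1}}=v(T)=1$, completing the verification once $\eta(\eps)$ is chosen comparable to $\eps^{2}$.
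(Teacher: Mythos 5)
Your roadmap matches the paper's proof in outline --- $n(L_1(\mu))=1$ turns the problem into constrained norm attainment, an Iwanik-type kernel representation plays the role of your ``disintegration'', and the repair is indeed a discard--realign--boost surgery on columns (the paper's Lemmas~\ref{lem:real1}, \ref{lem:real2} and~\ref{lem:real3}, assembled in Proposition~\ref{maintheorem}) --- but you defer precisely the step that constitutes the proof, and the two quantitative claims you make in its place fail as stated. First, truncating $T_0f_0$ to $G=\{|g_0|=1\}$ is not an $O(\eta/\eps)$ perturbation: your deficit integral $\int|T_0f_0|(1-\re\psi)\,d\mu<\eta$ gives no control over the mass of $T_0f_0$ on $\{1-\delta<|g_0|<1\}$, where $1-\re\psi$ can be arbitrarily small. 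If, say, $|g_0|\equiv 1-\eta^2$ on the support of $T_0f_0$ (which may lie off $\mathrm{supp}\,f_0$, where $|g_0|$ is unconstrained), the hypothesis holds but $G$ carries essentially no mass of $T_0f_0$, so $h\approx 0$ and $\|h-T_0f_0\|\approx 1$. One must instead keep $\{|g_0|\geq 1-\delta\}$ and move $g_0$ to a unimodular function there --- which is why the paper modifies $g$ (Lemma~\ref{lem:real1}, and $\tilde g=g/|g|$ in Lemma~\ref{lem:real3}) rather than keeping the pair fixed. Second, and more seriously, your justification of the surgery --- that ``the amount of boosting and the amount of realignment are small in $L_1$-average'' --- bounds the wrong quantity: under the representation $\mathcal{L}(L_1(m))\cong M(m)$ the operator norm is $\nor{d|\mu|^1/dm}_\infty$, an \emph{essential supremum} of columnwise variations, not an average. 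Boosting a column of norm $c$ up to norm $1$ costs $1-c$ in that supremum no matter how little $f_0$-mass the column carries, so average-smallness of the sub-maximality and cancellation defects yields no bound on $\|T-T_1\|$. Closing this is exactly the content of Lemma~\ref{lem:real3}: two Chebyshev-type selections (Lemma~\ref{elementary} to pick good atoms $J$, then inside each atom the sets $\tilde B_j$ where the misaligned marginal density is pointwise at most $\eps/2$), surgery confined to $\tilde B=\bigcup_{j}\tilde B_j$ with the kernel rotated by $\overline{\tilde g(y)}\,\overline{h(x,y)}$ and divided by the marginal density of the aligned part $\nu_c$ --- this is what makes $\langle\tilde f,\tilde g\rangle=1$ and $\langle T_{\tilde\nu}\tilde f,\tilde g\rangle=\|T_{\tilde\nu}\|=1$ hold \emph{simultaneously and exactly} --- together with the forced replacement of $f$ by a renormalized restriction $\tilde f$. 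These constructions are not recoverable from the goals you list; flagging them as ``where the real work lies'' leaves the theorem unproved.

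A smaller but genuine gap is the reduction to a setting where a disintegration exists. Localising ``to the $\sigma$-finite band carrying $f_0$ and $T_0f_0$'' does not make sense for the operator, because that band is not $T_0$-invariant, so $T_0$ has no restriction to it. The paper instead takes the closed span of all iterates $T_0^nf_m$ over a sequence with $\sup_m\|T_0f_m\|=1$ (so that the restricted operator still has norm, hence numerical radius, equal to one), obtaining a $T_0$-invariant separable band $Y$ with $L_1(\mu)=Y\oplus_1 Z$; it then invokes Kakutani's theorem to identify $Y$ with $L_1(m)$ for a finite regular Borel measure $m$ --- only then does Iwanik's representation supply the ``local actions'', which are measures (think of the identity operator) rather than functions, so the boosting and phase rotation must be performed on vector measures; and finally it glues the repaired operator back via $T(y,z)=(Sy,0)+T_0(0,z)$ and verifies $v(T)=1$ and attainment at the extended pair $(\,(\tilde y_0,0),(\tilde y_0^*,f_0^*|_Z)\,)$. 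These steps are standard once written down, but your sketch compresses them into a phrase; combined with the missing surgery above, the proposal is a correct plan following the paper's architecture rather than a proof.
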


As a first step, we have to start dealing with finite regular positive Borel measures, for which a representation theorem for operators exists.

\begin{prop}\label{maintheorem}
Let $m$ be a finite regular positive Borel measure on a compact Hausdorff space $\Omega$. Then $L_1(m)$ has the Bishop-Phelps-Bollob\'as property for numerical radius. More precisely, given $\eps>0$, there is $\eta(\eps)>0$ (which is independent of the measure $m$) such that
if $T$ is a norm-one element in $\mathcal{L}(L_1(m))$ and there exists an $(f_0, g_0) \in \Pi(L_1(m))$ satisfying $|\inner{Tf_0, g_0}|>1-\eta(\eps)$ , then there exist an operator $S\in \mathcal{L}(L_1(m))$, $(f_1, g_1) \in \Pi(L_1(m))$ such that
\[ |\inner{Sf_1, g_1}|=\|S\|=1,\ \  \|f_0-f_1\|\leq \eps, \ \ \|g_0-g_1\|\leq \eps\ \ \text{and}\ \ \|T-S\|\leq \eps.\]
\end{prop}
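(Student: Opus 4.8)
The plan is to prove Proposition~\ref{maintheorem} by exploiting the concrete structure of $L_1(m)$ and its operators. The crucial tool is that for a finite regular positive Borel measure $m$ on a compact Hausdorff space, there is a nice representation of bounded operators on $L_1(m)$. Indeed, since $L_1(m)^* = L_\infty(m)$, an operator $T \in \mathcal{L}(L_1(m))$ can be identified with a weak$^*$-measurable family, or more usefully, one knows that $\|T\| = v(T)$ because $n(L_1(m)) = 1$; this is exactly why the statement can assert $|\inner{Sf_1,g_1}| = \|S\| = 1$ rather than working with the numerical radius directly. So the first reduction I would make is to translate the numerical-radius hypothesis into a statement purely about the operator norm: if $(f_0,g_0) \in \Pi(L_1(m))$ with $|\inner{Tf_0,g_0}|$ close to $1 = \|T\|$, then in particular $f_0$ is nearly norm-attaining for $T$ in the sense that $\|Tf_0\|$ is close to $1$, and $g_0$ is nearly aligned with $\sign(Tf_0)$.

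First I would set up the $\Pi(L_1(m))$ structure explicitly. A pair $(f_0,g_0) \in \Pi(L_1(m))$ means $f_0 \in S_{L_1(m)}$, $g_0 \in S_{L_\infty(m)}$, and $\int f_0 \overline{g_0}\, dm = 1$; this forces $|g_0| = 1$ $m$-a.e.\ on the support of $f_0$ and $g_0 = \overline{\sign(f_0)}$ there. The condition $|\inner{Tf_0,g_0}| > 1 - \eta$ then says that $\int (Tf_0)\overline{g_0}\,dm$ is within $\eta$ of modulus one, so both $\|Tf_0\|_1$ is near $1$ and the phase of $Tf_0$ is close to $g_0$ on a large-measure set. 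The next step is the perturbation: I would modify $T$ on the range by correcting the phase, roughly replacing $T$ with an operator $S$ whose action on $f_0$ (or a slightly adjusted $f_1$) produces a function $Sf_1$ that exactly attains the norm and whose sign exactly matches a chosen $g_1$. The standard device here is to add a rank-one correction of the form $x \mapsto \lambda\, g_0(x)\, h$ for a suitable $h$, analogous to the operator $T_1 x = T_0 x + \lambda_1 \frac{\eps}{4} x_0^*(x) x_0$ used in the proof of Proposition~\ref{unif-convex-smooth}, and then iterate or take limits to land exactly in $\Pi_{\nuu}$.

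The hard part, and the reason finite measures must be handled first, is that in $L_1$ the norm-attainment geometry is delicate: the set where $Tf_0$ fails to have the correct phase can have small measure but still obstruct exact attainment, and one must simultaneously control the perturbations of $f_0$, of $g_0$, and of $T$ while keeping the pair in $\Pi$. I expect the main obstacle to be producing $f_1$, $g_1$, and $S$ all at once so that $|\inner{Sf_1,g_1}| = \|S\| = 1$ holds exactly, with all three distances bounded by $\eps$, and crucially with $\eta(\eps)$ independent of $m$. To achieve the uniformity I would carry out all estimates using only the modulus of the hypothesis and the convexity/smoothness-type inequalities available in $L_1$ (the uniform monotonicity of the $L_1$ norm on disjointly supported pieces), never invoking any quantity that depends on the particular measure $m$. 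The passage from finite regular Borel measures to an arbitrary measure $\mu$ in Theorem~\ref{thr:L_1-arbitrary} is then a separate, localization-and-exhaustion argument (reducing to the $\sigma$-finite and then finite case and using that the constant $\eta(\eps)$ does not depend on the measure), which I would defer to the deduction of the theorem from this proposition.
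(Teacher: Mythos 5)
Your setup is on target in two respects: since $n(L_1(m))=1$ you may replace $v$ by the operator norm throughout, and the structure of $\Pi(L_1(m))$ (namely $|g_0|=1$ a.e.\ on the support of $f_0$, with exact alignment there) is exactly the starting point of the paper's argument, which moreover reduces to $f_0\geq 0$ via an isometry of $L_1(m)$. But the central device you propose --- a rank-one phase correction $x\mapsto \lambda\, g_0(x)\,h$ followed by ``iterate or take limits to land exactly in $\Pi_{\nuu}$,'' modelled on the proof of Proposition~\ref{unif-convex-smooth} --- does not work in $L_1$, and this is not a repairable detail but the crux of why the proposition needs a different proof. In the iteration of Proposition~\ref{unif-convex-smooth}, the operators $T_n$ always converge (the perturbations are summable), but exact attainment at the limit requires the sequences $(x_n)$ and $(x_n^*)$ to be Cauchy, and that is extracted precisely from the positivity of the moduli of convexity $\delta_X$ and $\delta_{X^*}$. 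For $X=L_1(m)$ one has $\delta_{L_1}(\eps)=0$ for all $\eps<2$ (and $L_\infty$ is likewise not uniformly convex), so the almost-maximizing points can wander without converging, and the limit operator need only attain its norm ``asymptotically.'' Your appeal to uniform monotonicity of the $L_1$ norm on disjoint pieces gestures at the right kind of $L_1$-specific estimate, but it does not substitute for the missing convergence mechanism, and you never specify how exact attainment, $|\inner{Sf_1,g_1}|=\|S\|=1$, is actually achieved.

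What the paper does instead is non-iterative measure surgery, made possible by Iwanik's theorem: $\mathcal{L}(L_1(m))$ is lattice isometrically isomorphic to the space $M(m)$ of measures $\nu$ on $\Omega\times\Omega$ whose marginals are absolutely continuous with respect to $m$, normed by $\bigl\|\tfrac{d|\nu|^1}{dm}\bigr\|_\infty$. After reducing to a nonnegative simple $f$ with $\inner{f,g}=1$ exactly (Lemma~\ref{lem:real1}) and normalizing so that $\tfrac{d|\nu|^1}{dm}\equiv 1$ on the support of $f$ (Lemma~\ref{lem:real2}, quoted from \cite{CKLM}), the key step (Lemma~\ref{lem:real3}) writes $d\nu=h\,d|\nu|$ with $|h|=1$, splits $\nu=\nu_c+\nu_f$ according to whether $|g(y)h(x,y)-1|$ is small, shows via the convex-series Lemma~\ref{elementary} that the ``bad'' part $\nu_f$ has small marginal density on most of the support of $f$, discards it there, and renormalizes the ``good'' part $\nu_c$ so that the new operator $T_{\tilde\nu}$ attains its norm at $\tilde f$ with $\inner{T_{\tilde\nu}\tilde f,\tilde g}=1$ \emph{exactly} in a single step; all constants are explicit and independent of $m$, which is what makes the later $\sigma$-finite exhaustion in Theorem~\ref{thr:L_1-arbitrary} legitimate (that last reduction you describe correctly). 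To complete your proposal you would need to replace the iteration by some one-shot construction of this kind; without it, the proof has a genuine gap at its central point.
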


To prove this proposition, we need some background on representation of operators on Lebesgue spaces on finite regular positive Borel measures and several preliminary lemmas.

Let $m$ be a finite regular positive Borel measure on a compact Hausdorff space $\Omega$. If $\mu$ is a complex-valued Borel measure on the product space $\Omega\times \Omega$, then define their marginal measures $\mu^i$ on $\Omega$ ($i=1,2$) as following:
$\mu^1(A) = \mu(A\times \Omega)$ and $\mu^2(B)= \mu(\Omega\times B)$, where $A$ and $B$ are Borel measurable subsets of $\Omega$.

Let $M(m)$ be the complex Banach lattice of measures consisting of all complex-valued Borel measures $\mu$ on the product space $\Omega\times \Omega$ such that $|\mu|^i$ are absolutely continuous with respect to $m$ for $i=1,2$, endowed with the norm
\[\nor{ \frac{d|\mu|^1}{dm}}_\infty.\]
Each $\mu\in M(m)$ defines a bounded linear operator $T_\mu$ from $L_1(m)$ to itself by
\[
\inner{T_\mu (f), g}  = \int_{\Omega\times \Omega} f(x)g(y) \, d\mu(x,y),
\]
where $f\in L_1(m)$ and $g\in L_\infty(m)$. A.~Iwanik \cite{Iwanik} showed that the mapping $\mu\longmapsto T_\mu$ is a lattice isometric isomorphism from $M(m)$ onto $\mathcal{L}(L_1(m))$. Even though he showed this for the real case, it can be easily generalized to the complex case. For details, see \cite[Theorem~1]{Iwanik} and  \cite[IV Theorem 1.5 (ii), Corollary 2]{Schaefer}.

We will also use that given an arbitrary measure $\mu$, every $T\in \mathcal{L}(L_1(\mu))$ satisfies $v(T)=\|T\|$ \cite{DMPW} (that is, the space $L_1(\mu)$ has numerical index $1$).

\begin{lemma}[\textrm{\cite[Lemma~3.3]{AAGM2}}]\label{elementary}
Let $\{c_n\}$ be a sequence of complex numbers with $|c_n|\leq 1$ for every $n$, and let $\eta>0$ be such that for a convex series $\sum \alpha_n$, $\re \sum_{n=1}^\infty \alpha_n c_n >1-\eta$. Then for every $0<r<1$, the set $A : = \{ i \in \mathbb{N} : \re c_i > r \}$, satisfies the estimate
\[ \sum_{i\in A} \alpha_i \geq 1-\frac{\eta}{1-r}.\]
\end{lemma}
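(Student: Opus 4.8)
The plan is to exploit the elementary fact that splitting a convex combination according to whether its terms are large or small gives a linear inequality that can be solved for the mass sitting on the ``large'' set. Recall that a convex series means $\alpha_n\geq 0$ for all $n$ and $\sum_{n=1}^\infty \alpha_n = 1$. Since $|c_n|\leq 1$ we automatically have $\re c_n \leq 1$ for every $n$, so all the quantities involved are well behaved.

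First I would pass to real parts, writing
\[
1-\eta < \re\sum_{n=1}^\infty \alpha_n c_n = \sum_{n=1}^\infty \alpha_n \re c_n,
\]
which is legitimate because the $\alpha_n$ are nonnegative reals. Next I would split $\N$ into $A=\{i:\re c_i > r\}$ and its complement $B=\{i:\re c_i\leq r\}$, and estimate the two resulting partial sums separately: on $A$ I use the crude bound $\re c_i\leq 1$, while on $B$ I use $\re c_i\leq r$ by definition of $B$. Writing $s=\sum_{i\in A}\alpha_i$ and using $\sum_{i\in B}\alpha_i = 1-s$, this yields
\[
1-\eta < \sum_{i\in A}\alpha_i\,\re c_i + \sum_{i\in B}\alpha_i\,\re c_i \leq s + r(1-s) = r + s(1-r).
\]

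Finally I would solve this inequality for $s$. Subtracting $r$ and dividing by $1-r>0$ gives
\[
s > \frac{1-\eta-r}{1-r} = 1-\frac{\eta}{1-r},
\]
which is exactly the desired estimate (in fact with strict inequality). There is essentially no obstacle here: the argument is a one-line convexity splitting, and the only points requiring a word of care are the standing conventions (that ``convex series'' encodes $\alpha_n\geq 0$ and $\sum\alpha_n=1$, and that $|c_n|\leq 1$ forces $\re c_n\leq 1$ so the bound on $A$ is valid) and the fact that $1-r>0$ is needed to divide through without reversing the inequality.
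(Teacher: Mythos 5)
Your proof is correct: the paper itself states this lemma without proof, citing \cite[Lemma~3.3]{AAGM2}, and your convexity-splitting argument (bounding $\re c_i\leq 1$ on $A$ and $\re c_i\leq r$ on the complement, then solving $1-\eta< s+r(1-s)$ for $s$) is exactly the standard argument used in that source. Nothing is missing, and you even obtain the strict inequality.
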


From now on, $m$ will be a finite regular positive Borel measure on the compact Hausdorff space $\Omega$.

\begin{lemma}\label{lem:real1}
Suppose that there exist a non-negative simple function $f\in S_{L_1(m)}$ and a function $g\in S_{L_\infty(m)}$ such that
\[\re \inner{f, g} >1-\frac{\eps^3}{16}.\] Then there exist a nonnegative simple function $f_1\in S_{L_1(m)}$ and a function $g_1\in S_{L_\infty(m)}$ such that
\[g_1(x) =\chi_{{\rm supp }(f_1)}(x) + g(x)\chi_{\Omega\setminus {\rm supp}(f_1)}(x),\]
\[ \inner{f_1, g_1} =1,\ \  \|f-f_1\|_1<\eps, \ \ \|g-g_1\|_\infty<\sqrt\eps  \ \ \text{and} \ \ \ \ {\rm supp}( f_1) \subset {\rm supp}(f).\]
\end{lemma}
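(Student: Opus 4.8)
The plan is to exploit that, since $f\geq 0$ and $\|f\|_1=1$, the hypothesis $\re\inner{f,g}>1-\eps^3/16$ forces $\re g$ to be close to $1$ on the bulk of the mass of $f$; on that good set $|g-1|$ is automatically small because $|g|\leq 1$, and there we may safely replace $g$ by the constant $1$. Writing $\inner{f,g}=\int_\Omega fg\,dm$ and using $\re g\leq|g|\leq 1$, the hypothesis rewrites as
\[
\int_\Omega f(x)\bigl(1-\re g(x)\bigr)\,dm(x) < \frac{\eps^3}{16},
\]
which is the integral of a nonnegative function against the probability measure $f\,dm$. I would work with a direct Chebyshev estimate here rather than the discrete Lemma~\ref{elementary}, since the latter controls only averages of $g$ over the level sets of $f$ and would require $g$ to be simple, whereas I need pointwise control of $g$.

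First I would fix a threshold $s$ (the value $s=\eps^2/4$ will turn out to balance the two error terms) and set $A=\{x\in\Omega:\re g(x)>1-s\}$, a measurable set. On $A^c=\{1-\re g\geq s\}$ one has $f\leq \tfrac1s f(1-\re g)$ pointwise, so
\[
\int_{A^c} f\,dm \leq \frac1s\int_{A^c} f(1-\re g)\,dm \leq \frac1s\int_\Omega f(1-\re g)\,dm < \frac{\eps^3}{16s}=\frac\eps4 .
\]
Next I would define $f_1=\tfrac1c\,f\chi_A$, where $c=\int_A f\,dm=1-\int_{A^c}f\,dm>1-\eps/4>0$. Then $f_1$ is a nonnegative simple function (it takes only finitely many values), $\|f_1\|_1=1$, and ${\rm supp}(f_1)=A\cap{\rm supp}(f)\subset{\rm supp}(f)$, so that inclusion is automatic. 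The $L_1$-distance follows from $f-f_1=f\chi_{A^c}+(1-\tfrac1c)f\chi_A$ together with $0<c\leq1$:
\[
\|f-f_1\|_1=\int_{A^c}f\,dm+(1-c)=2\int_{A^c}f\,dm<\frac\eps2<\eps .
\]

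For the functional, I would put $g_1=\chi_{{\rm supp}(f_1)}+g\,\chi_{\Omega\setminus{\rm supp}(f_1)}$, which is exactly the prescribed form. Since $|g|\leq1$ and $g_1\equiv1$ on the positive-measure set ${\rm supp}(f_1)$, we get $\|g_1\|_\infty=1$, hence $g_1\in S_{L_\infty(m)}$; and because $f_1\geq0$ is supported where $g_1=1$, we obtain directly $\inner{f_1,g_1}=\int f_1\,dm=1$. The one genuinely quantitative point is the sup-norm estimate: on $A$ one has, using $|g|\leq1$,
\[
|g-1|^2=1-2\re g+|g|^2\leq 2(1-\re g)<2s=\frac{\eps^2}{2},
\]
so $|g-1|<\eps/\sqrt2<\sqrt\eps$ almost everywhere on $A\supseteq{\rm supp}(f_1)$; since $g-g_1=(g-1)\chi_{{\rm supp}(f_1)}$, this gives $\|g-g_1\|_\infty<\sqrt\eps$. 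The main obstacle is therefore only bookkeeping: choosing $s$ so that the Chebyshev bound $\eps^3/(16s)$ controls the $L_1$-error while $\sqrt{2s}$ simultaneously controls the $L_\infty$-error; the choice $s=\eps^2/4$ makes both work for $0<\eps<1$, and no deeper difficulty arises.
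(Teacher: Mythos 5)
Your argument is correct, and it takes a genuinely different route from the paper's. The paper works block by block on the simple function: writing $f=\sum_j \beta_j \chi_{B_j}/m(B_j)$, it first applies the convex-series Lemma~\ref{elementary} to select the set $J$ of indices whose block averages satisfy $\frac{1}{m(B_j)}\int_{B_j}\re g\,dm>1-\eps^2/4$ (so that $\sum_{j\in J}\beta_j>1-\eps/4$), and then a second averaging argument inside each surviving block trims $B_j$ to $\tilde B_j=B_j\cap\{\re g>1-\eps\}$ with $m(B_j\setminus\tilde B_j)<\frac{\eps}{4}m(B_j)$, before renormalizing to $f_1=\sum_{j\in J}\tilde\beta_j\chi_{\tilde B_j}/m(\tilde B_j)$. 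You collapse this two-stage selection into a single Chebyshev--Markov estimate for the probability measure $f\,dm$: with $A=\{\re g>1-\eps^2/4\}$ you get $\int_{A^c}f\,dm<\eps/4$ in one line and take $f_1=f\chi_A/c$, which (your exact computation $\|f-f_1\|_1=2(1-c)$ is right, since the two pieces have disjoint supports) is even sharper than the triangle-inequality bookkeeping in the paper. Both routes end with the same $g_1$ and the same pointwise inequality $|g-1|^2\le 2(1-\re g)$ on the trimmed set. Your version is shorter and mildly more general --- simplicity of $f$ enters only to see that $f\chi_A$ is again simple, so the argument works verbatim for any nonnegative $f\in S_{L_1(m)}$ --- and your threshold $1-\eps^2/4$ honestly delivers the stated estimate, indeed the stronger $\|g-g_1\|_\infty\le\eps/\sqrt{2}<\sqrt{\eps}$; by contrast, the paper's trimming at level $1-\eps$ yields only $|g-1|<\sqrt{2\eps}$ in the complex case, a harmless constant slip given the slack available in the proof of Proposition~\ref{maintheorem}. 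What the paper's format buys is that its two-stage selection (Lemma~\ref{elementary} plus a per-block trim) is the template reused in the harder Lemma~\ref{lem:real3}, where the same scheme must interact with the representing measure $\nu$; but since only the stated conclusion of the present lemma is used downstream --- and your $f_1=f\chi_A/c$ is in any case again of the canonical form $\sum_j\tilde\beta_j\chi_{B_j\cap A}/m(B_j\cap A)$ --- nothing is lost by your shortcut here.
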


\begin{proof}
Let$f= \sum_{j=1}^m \frac{\beta_j}{m(B_j)} \chi_{B_j}$ for some $(\beta_j)$ such that $\beta_j\geq 0$ for all $j$ and $\sum_{j=1}^m\beta_j=1$, and $B_j$'s are mutually disjoint. By the assumption, we have
\[ \re \inner{f, g} =\sum_{j=1}^n \beta_j \frac{1}{m(B_j)} \int_{B_j}\re g(x) \, dm(x)>1-\frac{\eps^3}{16},\] and letting
\[ J = \{ j : 1\leq j\leq n,  \frac{1}{m(B_j)} \int_{B_j}\re  g(x) \, dm(x)>1-\frac{\eps^2}4\} ,\] we have by Lemma~\ref{elementary}
\[ \sum_{j\in J } \beta_j >1-\frac{\eps}4.\]
For each $j\in J$, we have
\begin{align*}
1-\frac{\eps^2}4 &<  \frac{1}{m(B_j)} \int_{B_j} \re g(x) \, dm(x) \\
&= \frac{1}{m(B_j)} \int_{B_j\cap \{\re g\leq 1-\eps \}}\re g(x)\, dm(x)+  \int_{B_j\cap \{\re g>1-\eps\} } \re g(x) \, dm(x)\\
&\leq \frac{1}{m(B_j)} \left( (1-\eps)m( B_j\cap \{ \re g\leq 1-\eps \}) + m(B_j\cap \{\re g>1-\eps\} )  \right)\\
&=1-\eps \frac{m(B_j \cap \{ \re g\leq 1-\eps\}) }{m(B_j)}.
\end{align*}
This implies that
\[
\frac{m(B_j \cap \{\re g\leq 1-\eps\}) }{m(B_j)} < \frac{\eps}4.
\]
Define $\tilde B_j = B_j \cap \{ \re g> 1-\eps\}$ for all $j\in J$, $f_1 = \frac{1}{\sum_{j\in J} \beta_j}\sum_{j\in J} \beta_j \frac{\chi_{\tilde B_j}}{m(\tilde B_j)}$ and $g_1(x) = 1 $ on ${\rm supp}(f_1)$ and $g_1(x) = g(x)$ elsewhere. Then it is clear that ${\rm supp }(f_1) \subset {\rm supp}(f)$,  $\|g-g_1\|_\infty<\sqrt\eps$ and $\inner{f_1, g_1}=1$.
Finally we will show that $\|f-f_1\|<\eps$. Notice first that
\begin{align*}
\nor{\sum_{j\in J} \beta_j\frac{\chi_{\tilde B_j}}{m(\tilde B_j)} - \sum_{j\in J}\beta_j \frac{\chi_{B_j}}{m(B_j)}   } &\leq  \nor{\sum_{j\in J}\beta_j \frac{\chi_{\tilde B_j}}{m(\tilde B_j)} - \sum_{j\in J}\beta_j \frac{\chi_{\tilde B_j}}{m(B_j)}   } + \nor{\sum_{j\in J} \beta_j\frac{\chi_{\tilde B_j}}{m(B_j)} - \sum_{j\in J}\beta_j \frac{\chi_{B_j}}{m(B_j)}   } \\
&=2 \sum_{j\in J} \beta_j \frac{m(B_j\setminus \tilde B_j)}{m(B_j)} <\frac{\eps}2.
\end{align*}
Hence
\begin{align*}
\|f-f_1\| &\leq \nor{ \frac{1}{\sum_{j\in J} \beta_j}   \sum_{j\in J} \beta_j \frac{\chi_{\tilde B_j}}{m(\tilde B_j)}  -   \sum_{j\in J} \beta_j \frac{\chi_{\tilde B_j}}{m(\tilde B_j)} }  + \nor{ \sum_{j\in J} \beta_j \frac{\chi_{\tilde B_j}}{m(\tilde B_j)}-f  }\\
&\leq  \frac{1-\sum_{j\in J} \beta_j }{\sum_{j\in J} \beta_j } ~\nor{  \sum_{j\in J} \beta_j \frac{\chi_{\tilde B_j}}{m(\tilde B_j)}  }+
 \nor{ \sum_{j\in J} \beta_j \frac{\chi_{\tilde B_j}}{m(\tilde B_j)}-  \sum_{j\in J} \beta_j\frac{\chi_{B_j}}{m(B_j)}   }+\frac{\eps}{4}\\
 &=(1-\sum_{j\in J} \beta_j) +  \frac{\eps}2 + \frac{\eps}4\\
&\leq \frac{\eps}4 + \frac{\eps}2 + \frac{\eps}4= \eps.\qedhere
\end{align*}
\end{proof}

\begin{lemma}[\textrm{\cite[Lemma~3.3]{CKLM}}]\label{lem:real2}
Suppose that $T_\mu$ is a norm-one element in $\mathcal{L}(L_1(m))$ for some $\mu\in M(m)$ and there is a nonnegative simple function $f_0$ such that $f_0$ is a norm-one element of  $L_1(m)$ and $\|T_\mu f_0 \|\geq 1-\eps^3/2^6$ for some $0<\eps<1$. Then there exist  a norm-one bounded linear operator $T_\nu$ for some $\nu\in M(m, m)$ and a nonnegative simple function $f_1$ in $S_{L_1(m)}$ such that $\norm{T_\mu - T_\nu}\leq\eps$, $\|f_1- f_0\|\leq 3\eps$ and $\frac{d|\nu|^1}{dm}(x)=1$ for all $x\in {\rm supp}( f_1)$.
\end{lemma}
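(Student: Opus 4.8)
The plan is to let $h:=\frac{d|\mu|^1}{dm}$ play the role of the $L_\infty$ function $g$ in Lemma~\ref{lem:real1}, and then to repair the operator by a pointwise positive rescaling that fills in the marginal deficit on the support of the new function. First I would record the estimate coming from the integral representation: since $f_0\geq 0$,
\[
\nor{T_\mu f_0}_1 = \sup_{\nor{g}_\infty\leq 1}\Abs{\int_{\Omega\times\Omega}f_0(x)g(y)\,d\mu(x,y)}\leq \int_{\Omega\times\Omega}f_0(x)\,d|\mu|(x,y)=\int_\Omega f_0\,h\,dm,
\]
so that $\inner{f_0,h}\geq\nor{T_\mu f_0}_1\geq 1-\eps^3/2^6$. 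The Iwanik isometry gives $\nor{h}_\infty=\nor{T_\mu}=1$, hence $0\leq h\leq 1$ $m$-a.e.; in particular $h\in S_{L_\infty(m)}$ and $\re\inner{f_0,h}>1-\eps^3/16$.

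Applying Lemma~\ref{lem:real1} with $g:=h$ then yields a nonnegative simple $f_1\in S_{L_1(m)}$ with $\nor{f_0-f_1}_1<\eps$, comfortably below the required $3\eps$, and, by the explicit construction there, with ${\rm supp}(f_1)\subset\{h>1-\eps\}$. Writing $A:={\rm supp}(f_1)$, the essential feature I would carry forward is that $h\geq 1-\eps>0$ on $A$, so that $1/h$ is well defined and bounded on $A$.

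Next I would define $\nu$ by rescaling $\mu$ in the first variable only over $A$,
\[
d\nu(x,y)=\Bigl(\tfrac{\chi_A(x)}{h(x)}+\chi_{\Omega\setminus A}(x)\Bigr)\,d\mu(x,y).
\]
Because the scalar factor is strictly positive and bounded, $|\nu|$ is that same positive multiple of $|\mu|$, so $\nu\in M(m)$ and the first marginal density transforms transparently: $\frac{d|\nu|^1}{dm}=\frac1h\cdot h=1$ on $A$ and equals $h\leq 1$ off $A$. This simultaneously gives $\nor{T_\nu}=\nor{d|\nu|^1/dm}_\infty=1$ and the key conclusion $\frac{d|\nu|^1}{dm}=1$ on ${\rm supp}(f_1)$. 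Finally $\nu-\mu=\chi_A\frac{1-h}{h}\,\mu$, whose first marginal has density $\chi_A(1-h)$, whence
\[
\nor{T_\mu-T_\nu}=\nor{\tfrac{d|\nu-\mu|^1}{dm}}_\infty=\nor{(1-h)\chi_A}_\infty\leq\eps.
\]

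The one genuinely delicate point is forcing the first marginal density to equal $1$ \emph{exactly} on $A$ rather than merely $\leq 1$: the naive fix of adding a correction measure with marginal density $1-h$ can fail because that correction may cancel against $\mu$ in the total variation. Multiplying $\mu$ by the positive field $1/h$ on $A\times\Omega$ avoids this entirely, since $|c\mu|=c\,|\mu|$ for a nonnegative factor $c$, so both marginals scale without interference and the deficit is filled cleanly. Everything else—the three norm estimates and the membership $\nu\in M(m)$—is routine once Lemma~\ref{lem:real1} has been applied.
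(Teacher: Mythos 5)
The paper offers no proof of this lemma: it is quoted verbatim from \cite[Lemma~3.3]{CKLM}, so there is nothing in-text to compare against and your argument must be judged on its own merits. It is correct. The opening chain is valid: since $f_0$ depends only on the first variable, $\nor{T_\mu f_0}_1\leq \int_{\Omega\times\Omega}f_0(x)\,d|\mu|(x,y)=\int_\Omega f_0\,h\,dm$ with $h=\frac{d|\mu|^1}{dm}$, Iwanik's isometry gives $\nor{h}_\infty=\nor{T_\mu}=1$ and $0\leq h\leq 1$, and $1-\eps^3/2^6>1-\eps^3/16$, so Lemma~\ref{lem:real1} applies with $g=h$. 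Your key algebraic facts also check out: for a bounded nonnegative density $c$ one has $|c\mu|=c|\mu|$ (polar decomposition), so $\frac{d|\nu|^1}{dm}=\chi_A+h\chi_{\Omega\setminus A}$, which is $1$ on $A=\mathrm{supp}(f_1)$ and at most $1$ elsewhere (and exactly attains $1$ since $m(A)>0$, as $\nor{f_1}_1=1$); absolute continuity of $|\nu|^2$ follows from $|\nu|\leq\nor{c}_\infty|\mu|$. The estimate $\nor{T_\mu-T_\nu}=\nor{(1-h)\chi_A}_\infty\leq\eps$ and $\nor{f_0-f_1}<\eps\leq 3\eps$ then close the proof. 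This rescaling-by-the-marginal-density device is precisely the technique this paper itself deploys in the proof of Lemma~\ref{lem:real3}, where $\tilde\nu$ is built by dividing $\nu_c$ by $\frac{d|\nu_c|^1}{dm}$, so your route is very much in the spirit of the source.

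One point deserves to be made explicit rather than parenthetical: the \emph{statement} of Lemma~\ref{lem:real1} alone does not suffice for your construction. From the statement you only learn that $g_1=1$ on $\mathrm{supp}(f_1)$ and $\nor{h-g_1}_\infty<\sqrt\eps$, hence $h>1-\sqrt\eps$ on $A$, which yields only $\nor{T_\mu-T_\nu}\leq\sqrt\eps$ --- too weak for the claimed bound $\leq\eps$ (and reparametrizing $\eps\mapsto\eps^2$ is blocked because the hypothesis would then require $1-\eps^6/16$, which you do not have). You correctly lean instead on the internal construction of that proof, namely $\tilde B_j=B_j\cap\{\re g>1-\eps\}$, which gives $\mathrm{supp}(f_1)\subset\{h>1-\eps\}$; since that proof appears in the same paper this is legitimate, but you should cite the construction, not merely the lemma, when you do it.
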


\begin{lemma}\label{lem:real3}
Suppose that $T_\nu\in \mathcal{L}(L_1(m))$ is a norm-one operator, $f=\sum_{i=1}^n \beta_i \frac{\chi_{B_i}}{m(B_i)}$, where $m(B_j)>0$ for all $1\leq j\leq n$ and  $\{B_j\}_{j=1}^n$ are mutually disjoint Borel subsets of $\Omega$, is a norm-one nonnegative simple function and $g$ is an element of $S_{L_\infty(m)}$ such that
\[\re \inner{g, T_\nu f} \geq 1-\frac{\eps^6}{2^7}\] for some $0<\varepsilon<1$ and
\[\frac{d|\nu|^1}{dm}(x)=1, \ \ \ \ g(x) = 1\] for all x in the support of $f$.

Then there exist a nonnegative simple function $\tilde{f}\in S_{L_1(m)}$,  a function $\tilde g\in S_{L_\infty(m)}$ and  an operator  $T_{\tilde\nu}$ in $\mathcal{L}(L_1(m), L_1(m))$  such that
\[\inner{\tilde g, T_{\tilde\nu}\tilde f}=\|T_{\tilde\nu}\|=1,\ \ \|T_\nu - T_{\tilde\nu}\|\leq 2\eps,\ \ \|f-\tilde f\|\leq 3\eps,\ \ \|g-\tilde g\|\leq \sqrt\eps\  \text{ and } \ \inner{\tilde f, \tilde g}=1.\]
\end{lemma}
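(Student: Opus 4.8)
The plan is to pass to the adjoint, disintegrate $\nu$ over the support of $f$, and then modify the fibres only over the region where the approximate attainment is good. First I would set $h = T_\nu^* g \in L_\infty(m)$, so that $\nor{h}_\infty \le \nor{T_\nu} = 1$ and $\re \int_\Omega f\,h\,dm = \re\inner{g, T_\nu f} \ge 1 - \eps^6/2^7$; since $1 - \re h \ge 0$ this gives $\int_\Omega f\,(1-\re h)\,dm \le \eps^6/2^7$. Using that $\frac{d|\nu|^1}{dm} = 1$ on $\mathrm{supp}(f)$, I would disintegrate the restriction of $\nu$ to $\mathrm{supp}(f)\times\Omega$ as a family of complex fibre measures $\nu_x$ with $|\nu_x|(\Omega) = 1$, write $\nu_x = w_x\,|\nu_x|$ with $|w_x| = 1$, and note that $h(x) = \int_\Omega g\,d\nu_x$ for $x\in\mathrm{supp}(f)$.

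Next I would fix a threshold $\delta$, a suitable multiple of $\eps^4$, put $S = \{x : 1 - \re h(x) < \delta\}$ and define $\tilde f = f\chi_S / \nor{f\chi_S}_1$; Markov's inequality applied to $\int f(1-\re h)\,dm \le \eps^6/2^7$ bounds $\int_{\Omega\setminus S} f\,dm$ by $\eps^6/(2^7\delta)$, whence $\tilde f$ is a nonnegative simple norm-one function with $\mathrm{supp}(\tilde f)\subseteq\mathrm{supp}(f)$ and $\nor{f-\tilde f}_1 \le 3\eps$. For $x\in S$ the inequality $\int(1-\re(g w_x))\,d|\nu_x| < \delta$ together with Markov shows that the ``good fibre set'' $G_x = \{y : \re(g(y)w_x(y)) \ge 1-\sqrt\delta\}$ carries $|\nu_x|$-mass $c_x \ge 1-\sqrt\delta$; on $G_x$ one has $|g| \ge \re(gw_x) \ge 1-\sqrt\delta$ and $|g w_x - 1|^2 \le |g|^2 - 2\re(gw_x) + 1 \le 2\sqrt\delta$, so $w_x$ is within a multiple of $\delta^{1/4}$ of $\overline{\sign(g)} = \bar g/|g|$.

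Then I would build the new data. On $S$, replace $\nu_x$ by $\tilde\nu_x = \overline{\sign(g)}\,\rho_x$, where $\rho_x = c_x^{-1}\,|\nu_x|\!\restriction_{G_x}$ is a probability measure, and keep $\nu$ unchanged off $S\times\Omega$; this defines $\tilde\nu\in M(m)$ with $\frac{d|\tilde\nu|^1}{dm} = 1$ on $\mathrm{supp}(\tilde f)$ and $\le 1$ elsewhere, so $\nor{T_{\tilde\nu}} = 1$. Since the modification is supported on $S\times\Omega$, the Iwanik isometry gives $\nor{T_\nu - T_{\tilde\nu}} = \mathrm{ess\,sup}_{x\in S}\nor{\nu_x - \tilde\nu_x}$, and for $x\in S$ I would split this into the cost of discarding $\Omega\setminus G_x$ (at most $\sqrt\delta$), of correcting the phase on $G_x$ (a multiple of $\delta^{1/4}$) and of the renormalisation by $c_x$ (at most $\sqrt\delta$), obtaining $\nor{T_\nu-T_{\tilde\nu}}\le 2\eps$. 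Because $\tilde f\ge0$ and each $\tilde\nu_x$ has phase $\overline{\sign(g)}$, the function $T_{\tilde\nu}\tilde f = \overline{\sign(g)}\,R_0$ with $R_0\ge0$ and $\nor{R_0}_1 = \int \tilde f(x)\,\rho_x(\Omega)\,dm = 1$; setting $\tilde g = \sign(g)$ on $\mathrm{supp}(R_0)$ and $\tilde g = g$ off it yields $\inner{\tilde g, T_{\tilde\nu}\tilde f} = \nor{R_0}_1 = 1 = \nor{T_{\tilde\nu}}$, while $\tilde g = 1 = g$ on $\mathrm{supp}(\tilde f)$ gives $\inner{\tilde f,\tilde g} = 1$. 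Finally $|\tilde g - g| = 1-|g| \le \sqrt\delta \le \sqrt\eps$ on $\mathrm{supp}(R_0)$ and $0$ elsewhere, so $\nor{g-\tilde g}_\infty \le \sqrt\eps$.

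The main obstacle is that $\nor{T_\nu - T_{\tilde\nu}}$ is an \emph{essential supremum} over the fibres, whereas the hypothesis only controls the $f$-average of $1 - \re h$; this forces the two-sided move of first shrinking the support of $f$ to the good set $S$ (paying a small $L_1$ error) and only then modifying $\nu$, so that the fibre estimates hold pointwise in $x$. A secondary technical point I would need to treat carefully is the existence of the disintegration $\{\nu_x\}$ on a general compact Hausdorff $\Omega$ and the verification that the renormalised, phase-corrected $\tilde\nu$ still lies in $M(m)$ (both marginals absolutely continuous with respect to $m$); the precise exponent $\eps^6/2^7$ in the hypothesis is exactly what is consumed in balancing the three error terms above.
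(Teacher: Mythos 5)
Your proposal is correct in substance and, despite the different packaging, implements the same strategy as the paper's proof: shrink the support of $f$ to the part where attainment is good (paying a small $L_1$ error), split $\nu$ into a ``close'' and a ``far'' part according to whether $g(y)$ times the phase of $\nu$ is nearly $1$, discard the far part, rotate the close part so that its phase becomes $\overline{\sign(g)}$, renormalize so the first marginal density returns to $1$, and replace $g$ by $\sign(g)$ where $|g|$ is close to $1$. The paper, however, never disintegrates: it works directly on the product space with the polar decomposition $d\nu=h\,d|\nu|$, $|h|=1$, uses the set $C=\{(x,y):|g(y)h(x,y)-1|<\sqrt{\eps}/2^{3/2}\}$ in place of your fibrewise sets $\bigcup_x\{x\}\times G_x$, and replaces your fibre masses $|\nu_x|(\Omega\setminus G_x)$ and $c_x$ by the marginal densities $\frac{d|\nu_f|^1}{dm}$ and $\frac{d|\nu_c|^1}{dm}$; likewise, instead of your single Markov cut $S=\{x: 1-\re h(x)<\delta\}$ it first applies the convex-series Lemma~\ref{elementary} over the blocks $B_j$ to retain a set $J$ with $\sum_{j\in J}\beta_j$ large, and then a Chebyshev argument inside each retained $B_j$ to find $\tilde B_j$ on which $\frac{d|\nu_f|^1}{dm}\leq \frac{\eps}{2}$ pointwise, which is exactly your two-stage ``average, then pointwise'' move.

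The one ingredient of yours that does not survive as written on a general compact Hausdorff $\Omega$ is the disintegration $\{\nu_x\}$, which you rightly flagged: disintegration theorems need Souslin/Polish-type hypotheses or strong liftings, and here $m$ is only a regular Borel measure on an arbitrary compact Hausdorff space. Fortunately this is cosmetic rather than fatal, because every fibrewise quantity you use has an exact marginal reformulation: $1-\re h(x)$, with $h=T_\nu^*g$, is precisely the density $\frac{d\lambda^1}{dm}(x)$ of the first marginal of $d\lambda=\bigl(1-\re\bigl(g(y)h(x,y)\bigr)\bigr)\,d|\nu|(x,y)$, your far mass becomes $\frac{d|\nu_f|^1}{dm}(x)$, and your renormalization $c_x^{-1}$ is the factor $\bigl(\frac{d|\nu_c|^1}{dm}(x)\bigr)^{-1}$ appearing in the paper's definition of $\tilde\nu$; rewritten this way your argument is the paper's. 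Two smaller remarks: your set $S$ should be intersected with $\mathrm{supp}(f)$, since the fibres have full mass $1$ only there; and, curiously, your bookkeeping with $\delta$ of order $\eps^4$ actually delivers the stated linear bound $\|T_\nu-T_{\tilde\nu}\|\leq 2\eps$, whereas the paper's own computation for this step only produces $\|T_\nu-T_{\tilde\nu}\|<3\sqrt{\eps}$, a minor internal discrepancy with the lemma as stated (harmless for the application, where the lemma is invoked with a parameter that is itself a small power of $\eps$).
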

\begin{proof}
Since \[\re \inner{g, T_\nu f} \geq 1-\frac{\eps^6}{2^7},\] we have
\begin{align*}
 1-\frac{\eps^6}{2^7}< \re  \inner{g, T_\nu f} = \int_{\Omega\times \Omega} f(x)\re g(y) \, d\mu(x,y) = \sum_{j=1}^n \beta_j \int_{\Omega\times \Omega} \frac{\chi_{B_j}(x)}{m(B_j)}\ \re g(y)\, d\nu(x,y).
 \end{align*}
Let $J= \{ j : \int_{\Omega\times \Omega} \frac{\chi_{B_j}(x)}{m(B_j)}\re g(y)\, d\nu(x,y) > 1-\frac{\eps^3}{2^6}\}$. Then from Lemma~\ref{elementary} we have $\sum_{j\in J}\beta_j >1-\frac{\eps^3}{2}$.
Let $f_1 = \sum_{j\in J} \tilde \beta_j \frac{\chi_{B_j}}{m(B_j)}$, where $\tilde \beta_j = \beta_j / (\sum_{j\in J}\beta_j )$ for all $j\in J$. Then
\[
\|f_1 - f\| \leq \nor{ \sum_{j\in J} (\tilde \beta_j - \beta_j) \frac{\chi_{B_j}}{m(B_j)} } + \sum_{j\in J} \beta_j \leq \eps^3\leq \eps.
\]
Note that there is a Borel measurable function $h$ on $\Omega\times \Omega$ such that $d\nu(x,y) = h(x,y)\, d|\nu|(x,y)$ and $|h(x,y)|=1$ for all $(x,y)\in \Omega\times \Omega$. Let
\[ C= \left\{ (x,y) \,:\, |g(y)h(x,y)-1| < \frac{\sqrt\eps}{2^{3/2}} \right\}.\]
Define  two measures $\nu_{f}$ and $\nu_{c}$ as follows:
\begin{equation*}
\nu_f(A) = \nu(A\setminus C) \quad \text{and} \quad
\nu_c(A) = \nu(A\cap C)
\end{equation*}
for every Borel subset $A$ of $\Omega\times \Omega$. It is clear that
$$
d\nu= d\nu_{f} + d\nu_{c},\quad d|\nu_f|  =\bar  hd\nu_f,\quad  d|\nu_c| =\bar  hd\nu_c, \quad \text{and}\quad  d|\nu| = d|\nu_f|+d|\nu_c|.
$$
Since $\frac{d|\nu|^1}{dm_1}(x)=1$ for all $x\in \bigcup_{j=1}^n B_j$, we have
\[
1=\frac{d|\nu|^1}{dm_1}(x)  = \frac{d|\nu_{f}|^1}{dm_1}(x)+ \frac{d|\nu_{c}|^1}{dm_1}(x)
\]
for all $x\in B=\bigcup_{j=1}^n B_j$,
and we deduce that $|\nu|^1(B_j)=m_1(B_j)$ for all $1\leq j\leq n$.

We claim that $\frac{|\nu_{f}|^1(B_j) }{m_1(B_j)} \leq \frac{\eps^2}{2^2}$ for all $j\in J$. Indeed, if $|g(y)h(x,y)-1|\geq  \frac{\sqrt\eps}{2^{3/2}}$, then $\re (g(y)h(x,y))\leq 1-\frac{\eps}{2^4}$. So we have
\begin{align*}
1-\frac{\eps^3}{2^6} &\leq \frac{1}{m_1(B_j)}\re \int_{\Omega\times \Omega} \chi_{B_j(x)} g(y)\, d\nu(x,y)\\
&= \frac{1}{m_1(B_j)} \int_{\Omega\times \Omega} \chi_{B_j(x)} \re\big(g(y)h(x,y)\big)  \, d|\nu|(x,y)\\
&= \frac{1}{m_1(B_j)} \int_{\Omega\times \Omega} \chi_{B_j(x)} \re\big(g(y)h(x,y)\big)  \, d|\nu_f|(x,y) \\
&\ \ \ \ \ +  \frac{1}{m_1(B_j)} \int_{\Omega\times \Omega} \chi_{B_j(x)} \re\big(g(y)h(x,y)\big)  \, d|\nu_c|(x,y) \\
&\leq \frac{1}{m_1(B_j)} \left((1-\frac{\eps}{2^4}) |\nu_f|^1(B_j) +|\nu_c|^1(B_j)\right) = 1 - \frac{\eps}{2^4} \frac{|\nu_{f}|^1(B_j)}{m_1(B_j)}.
\end{align*} This proves our claim.

We also claim that for each $j\in J$, there exists a Borel subset $\tilde B_j$ of $B_j$  such that
\[ \left(1-\frac{\eps}2\right) m_1(B_j) \leq m_1(\tilde B_j) \leq m_1(B_j)\] and
\[ \frac{d|\nu_{f}|^1}{dm_1}(x)\leq \frac{\eps}2 \] for all $x\in \tilde B_j$.
Indeed, set $\tilde B_j = B_j\cap \left\{ x \in \Omega : \frac{d|\nu_{f}|^1}{dm_1}(x)\leq\frac {\eps}2\right\}$. Then
\[ \int_{B_j \setminus \tilde B_j} \frac{\eps}2 \, dm_1(x) \leq  \int_{B_j} \frac{d|\nu_{f}|^1}{dm_1}(x) \, dm_1(x) = |\nu_{f}^1|(B_j)\leq \frac{\eps^2}{2^2}m_1(B_j). \] This shows that $m_1(B_j \setminus \tilde B_j) \leq \frac{\eps}2 m_1(B_j)$. This proves our second claim.

Now, we define $\tilde{g}$ by $\tilde g(y) = \frac{g(y)}{|g(y)|}$ if $|g(y)|\geq 1-\frac{\sqrt{\eps}}{2^{3/2}}$ and $\tilde g(y) = g(y)$ if $|g(y)|<1-\frac{\sqrt{\eps}}{2^{3/2}}$, and we write $\tilde f = \sum_{j\in J} \tilde\beta_j \frac{\chi_{\tilde B_j} }{m_1(\tilde B_j) }$.
It is clear that $\tilde g \in S_{L_\infty(m)}$, $\|g-\tilde g\|<\sqrt\eps$ and $\tilde g(y) = 1$ for all $x\in  {\rm supp} \tilde{f}$.

Finally, we define the measure
\[
d\tilde \nu(x,y) = \sum_{j\in J} \chi_{\tilde B_j}(x)  \overline{\tilde g(y)} \overline{ h(x, y)} d\nu_c(x,y) \left( \frac{d|\nu_{c}|^1}{dm_1}(x) \right)^{-1} + \chi_{J_1\setminus \tilde B}(x) d\nu(x,y),
\]
where $\tilde B= \bigcup_{j\in J} \tilde B_j$. It is easy to see that $\frac{d|\tilde\nu|^1}{dm_1}(x) =1$ on $\tilde B$ and $\frac{d|\tilde\nu|^1}{dm_1}(x)\leq 1$ elsewhere. Note that
\begin{align*}
d(\tilde \nu- \nu)(x,y) &= \sum_{j\in J} \chi_{\tilde B_j}(x) \left[ \overline{\tilde g(y)} \overline{ h(x, y)}\left( \frac{d|\nu_{c}|^1}{dm_1}(x) \right)^{-1} -1\right] d\nu_c(x,y)   \\
&\ \ \ \ - \sum_{j\in J} \chi_{\tilde B_j}(x) d\nu_f(x,y).
\end{align*}
If $(x,y)\in C$, then $|g(y)|\geq 1-\frac{\sqrt{\eps}}{2^{3/2}} \geq 1-\frac{1}{2^{3/2}}$ and
\begin{align*}
 \left| \overline{\tilde g(y)} \overline{ h(x, y)}-1 \right| &=  \left|  \frac{g(y)}{|g(y)|}  h(x, y)-1  \right| \\
 & \leq \frac{ \left|  g(y) h(x, y)-1  \right| }{|g(y)|} + \frac{\big|1-|g(y)|\big|}{|g(y)|} \\
 &\leq 2 \frac{ \left|  g(y) h(x, y)-1  \right| }{|g(y)|} \leq 2\frac{\sqrt{\eps}}{2^{3/2}}\frac{2^{3/2}}{2^{3/2}-1}\leq 2\sqrt{\eps}.
\end{align*}
Hence, for all $(x,y)\in C$ we have
\begin{align*}
 \left| \overline{\tilde g(y)} \overline{ h(x, y)}\left( \frac{d|\nu_{c}|^1}{dm_1}(x) \right)^{-1} -1\right|  &\leq
 \left| \overline{\tilde g(y)} \overline{ h(x, y)}-1 \right|~ \left( \frac{d|\nu_{c}|^1}{dm_1}(x) \right)^{-1}  +   \left| \left( \frac{d|\nu_{c}|^1}{dm_1}(x) \right)^{-1} -1 \right|\\
 &\leq   2\sqrt{\eps}~ \left( \frac{d|\nu_{c}|^1}{dm_1}(x) \right)^{-1} +\left| \left( \frac{d|\nu_{c}|^1}{dm_1}(x) \right)^{-1} -1 \right|.
\end{align*}
So, we have for all $x\in J_1$,
\begin{align*}
\frac{d| \tilde \nu - \nu|^1}{dm_1}(x) &\leq \sum_{j\in J} \chi_{\tilde B_j}(x) \left[  2\sqrt{\eps}~ \left( \frac{d|\nu_{c}|^1}{dm_1}(x) \right)^{-1} +\left| \left( \frac{d|\nu_{c}|^1}{dm_1}(x) \right)^{-1} -1 \right|
   \right] \frac{d|\nu_c|^1}{dm_1}(x) \\
&\ \ \ \ + \sum_{j\in J} \chi_{\tilde B_j}(x) \frac{d|\nu_f|^1}{dm_1}(x)\\
&\leq \sum_{j\in J} \chi_{\tilde B_j}(x) \left( 2\sqrt{\eps} +   \left( 1-\frac{d|\nu_{c}|^1}{dm_1}(x)   \right)\right) + \sum_{j\in J} \chi_{\tilde B_j}(x) \left( \frac{d|\nu_{f}|^1}{dm_1}(x) \right)\\
&\leq 2\sqrt{\eps}+\eps <3\sqrt{\eps}.
\end{align*}
This gives that $\|T_\nu - T_{\tilde \nu} \|< 3\sqrt\eps$. Note also that, for all $j\in J$,
\begin{align*}
 \inner{ T_{\tilde \nu} \frac{\chi_{\tilde B_j}}{m_1(\tilde B_j)}, \tilde g} &= \int_{\Omega\times \Omega} \frac{\chi_{\tilde B_j}(x)}{m_1(\tilde B_j)} \tilde  g(y) \,d\tilde \nu(x,y)\\
 & =\int_{\Omega\times \Omega} \frac{\chi_{\tilde B_j}(x)}{m_1(\tilde B_j)} \overline{h(x,y)}\left( \frac{d|\nu_{c}|^1}{dm_1}(x) \right)^{-1} \,\, d\nu_c(x,y)\\
 &=\int_{\Omega} \frac{\chi_{\tilde B_j}(x)}{m_1(\tilde B_j)}\left( \frac{d|\nu_{c}|^1}{dm_1}(x) \right)^{-1} \,  d|\nu_c|^1(x) \\
 &= \int_{\Omega} \frac{\chi_{\tilde B_j}(x)}{m_1(\tilde B_j)} \, dm_1(x)=1.
 \end{align*}
Hence we get $\inner{T_{\tilde \nu} \tilde f, \tilde g} =1$, which implies that $\|T_{\tilde \nu} \tilde f\|=\|T_{\tilde \nu}\|= 1$. Finally,
\begin{align*}
\|\tilde f - f\| &\leq \|\tilde f - f_1 \| + \|f_1 - f\|\\
& = \nor{ \sum_{j\in J} \tilde \beta_j \frac{\chi_{\tilde B_j}}{m_1(\tilde B_j)} - \sum_{j\in J} \tilde \beta_j \frac{\chi_{B_j}}{m_1(B_j)}}  +\eps \\
& \leq \sum_{j\in J} \tilde \beta_j \left(\nor{ \frac{\chi_{\tilde B_j}}{m_1(\tilde B_j)} -  \frac{\chi_{ B_j}}{m_1(\tilde B_j)} } +\nor{ \frac{\chi_{ B_j}}{m_1(\tilde B_j)} - \frac{\chi_{B_j}}{m_1(B_j)}}\right) +\eps \\
&=2\sum_{j\in J}\tilde \beta_j \frac{ m_1(B_j\setminus \tilde B_j)}{m_1(\tilde B_j)} +\eps\\
&\leq 2\sum_{j\in J}\tilde \beta_j \frac{ \frac{\eps}2 m_1(B_j)}{m_1(\tilde B_j)} +\eps\leq  \frac{\eps}{1-\eps/2}+\eps < 3\eps.\qedhere
\end{align*}
\end{proof}

We are now ready to present the proof of the main result in the case of finite regular positive Borel measures.

\begin{proof}[Proof of Proposition~\ref{maintheorem}]
Let $\delta_1 = \frac{\delta_2^3}{5\cdot 2^4}$, $\delta_2 = \frac{\delta_3^{12}}{3^2\cdot 2^{14}}$ and $\delta_3 = \left(\frac{\eps}{10}\right)^2$ for some $0<\eps<1$.
Suppose that $T\in \mathcal{L}(L_1(m))$ with $\|T\|=1$ and that there is an $f_0\in S_{L_1(m)}$ and $g_0\in S_{L_\infty(m)}$ such that $\inner{f_0, g_0}=1$ and  $|\inner{Tf_0, g_0}|>1-\frac{\delta_1^3}{2^6}$. Then there is an isometric isomorphism $\Psi$ from $L_1(m)$ onto itself such that $\Psi(f_0)=|f_0|$ and there is a scalar number $\alpha$ in $S_\mathbb{R}$ such that $|\inner{Tf_0, g_0}| = \inner{\alpha T f_0, g_0}$. Then letting $f_1= \Psi f_0$, $g_1 = (\Psi^{-1})^* g_0$ and $T_1= \alpha \Psi T\Psi^{-1}$,  we have
\[
\inner{ Sf_1, g_1} = \inner{\alpha \Psi T_0\Psi^{-1} \Psi f_0, (\Psi^{-1})^* g_0} = \inner{ \alpha Tf_0, g_0} > 1-  \frac{\delta_1^3}{2^6} \ \ \ \text{and} \ \ \ \inner{f_1, g_1} = \inner{\Psi f_0, (\Psi^{-1})^*g_0}=1.
\]
Since $\|T_1f_1\|>1-\delta \frac{\delta_1^3}{2^6}$, by Lemma~\ref{lem:real2}, there exists a norm-one bounded operator $T_\nu$ and a nonnegative simple function $f_2\in S_{L_1(m)}$ such that $\|T_1-T_\nu\|\leq \delta_1$, $\|f_2 - f_1 \|\leq 3\delta_1$ and $\frac{d|\nu|^1}{dm_1}(x)=1$ for all $x\in {\rm supp}(f_2)$.
Then
\begin{align*}
\inner{T_\nu f_2, g_1}& = \inner{T_1 f_1, g_1} - \inner{T_1 f_1 - T_1 f_2, g_1 } -\inner{T_1f_2- T_\nu f_2, g_1}\\
&\geq \inner{T_1f_1, g_1} - \|f_1 - f_2\| - \|T_1-T_\nu\|\\
&\geq 1-\frac{\delta_1^3}{2^6} - 3\delta_1 - \delta_1 \geq 1-5\delta_1=1-\frac{\delta_2^3}{16}.
\end{align*}
Notice also that
\[ \inner{f_2, g_1} = \inner{f_1, g_1} - \inner{f_1-f_2, g_1}\geq 1-\|f_1-f_2\| \geq 1-3\delta_1\geq 1-5\delta_1=1-\frac{\delta_2^3}{16}.\]
By Lemma~\ref{lem:real1} there are a nonnegative simple function $f_3\in S_{L_1(m)}$ and a function $g_3\in S_{L_\infty(m)}$ such that
\[ g_3(x) = \chi_{ {\rm supp} f_3}(x) + g_2(x)\chi_{\Omega \setminus {\rm supp } f_3}(x)\]
\[ \|f_2- f_3\|\leq \delta_2, \|g_3-g_1\|\leq \sqrt\delta_2 \ \ \ \text{and} \ \ \ \inner{f_3, g_3} =1.\]
So we have
\begin{align*}
\inner{T_\nu f_3, g_3}& = \inner{ T_\nu f_2, g_1} -\inner{ T_\nu f_2- T_\nu f_3, g_1} - \inner{T_\nu f_3, g_1-g_3}\\
& \geq 1-\frac{\delta_2^3}{16}-2\sqrt\delta_2 \geq 1-3\sqrt\delta_2 = 1-\frac{\delta_3^6}{2^7}.
\end{align*}
By Lemma~\ref{lem:real3}, there exist  $f_4\in S_{L_1(m)}$ and $g_4\in S_{L_\infty(m)}$ and an operator $T_4$ such that
\[ \inner{g_4, T_4f_4}=1=\|T_4\|,\   \|T_4-T_\nu\|\leq 2\delta_3,\  \|f_4-f_3\|\leq 3\delta_3, \|g_4- g_3\|\leq \sqrt\delta_3 \ \text{and} \\ \inner{f_4, g_4}=1.\]
So we have
\begin{align*}
\|T_4 - T_1 \|& \leq \|T_4 - T_\nu \| + \|T_\nu - T_1 \| \leq \delta_1 + 2\delta_3 \leq 3\delta_3\\
\|f_1 - f_4 \| &\leq \|f_1 - f_2\| + \|f_2 -f_3\|+\|f_3-f_4\|\leq 3\delta_1 + \delta_2 + 3\delta_3 \leq 10\delta_3\\
 \|g_1 - g_4 \| &\leq \|g_1 - g_3\|+ \|g_3-g_4\|\leq \delta_2 + \sqrt{\delta_3} \leq 2\sqrt{\delta_3}.
\end{align*}
Let $S = \alpha \Psi^{-1} T_4 \Psi$, $\tilde f = \Psi^{-1} f_4$ and $\tilde g =\Psi^* g_4$, then we have
\[ \norm{ T- S} =\norm{ T- \alpha \Psi^{-1} T_4 \Psi} = \norm{\alpha \Psi T \Psi^{-1} - T_4 } = \norm{ T_1 - T_4}\leq 3\delta_3\]
\[ \|f_0 - \tilde f \| = \|f_0 - \Psi^{-1}f_4\| = \|f_1 - f_4\|\leq 10\delta_3\]
\[ \|g_0 - \tilde f \| = \| g_0 - \Psi^* g_4\| = \|(\Psi^{-1})^*g_0 - g_4\| = \|g_1 -g_4\|\leq  2\sqrt{\delta_3}\]
\[ \inner{ \tilde f , \tilde g} = \inner{ \Psi^{-1} f_4, \Psi^* g_4} = \inner{f_4, g_4}=1\] and
\[ \left|\inner{S\tilde f, \tilde g}\right| =| \inner{ \alpha \Psi^{-1}T_4\Psi \Psi^{-1} f_4, \Psi^* g_4}|= |\alpha|=1.\]
This completes the proof.
\end{proof}

Finally, we may give the proof of the main result in full generality.

\begin{proof}[Proof of Theorem~\ref{thr:L_1-arbitrary}]
Notice that the Kakutani representation theorem  (see \cite{Lac} for a reference) says that for every $\sigma$-finite measure $\nu$, the space $L_1(\nu)$ is isometrically isomorphic to $L_1(m)$ for some positive Borel regular measure on a compact Hausdorff space. Then, by Proposition~\ref{maintheorem}, there is a universal function $\eps\longmapsto \eta(\eps)>0$ which gives the BPBp-$\nuu$ for $L_1(\nu)$ for every $\sigma$-finite measure $\nu$.

Fix $\eps>0$. Suppose that $T_0\in \mathcal{L}(L_1(\mu))$ with $v(T_0)=1$ and $(f_0, f_0^*)\in \Pi(L_1(\mu))$ satisfy
\[
|\inner{f_0^*, T_0f_0 } | > 1-\eta(\eps).
\]
Choose a sequence  $\{f_n\}$ in $L_1(\mu)$ such that $\sup_n \|T_0f_n\|=1$ and let $G$ be the closed linear span of
$$
\bigl\{ T^n f_m : n,m\in \mathbb{N}\cup \{0\} \bigr\}.
$$
As $G$ is separable, there is a dense subset $\{g_n\,:\,n\in \N\}$ of $G$ and let $E = \bigcup_{n=1}^\infty {\rm supp} ~g_n$, where ${\rm supp}~g_n$ is the support of $g_n$. Then the measure $\mu|_E$ is $\sigma$-finite. Let  $Y = \{ f \in L_1(\mu) : {\rm supp(f) }\subset E\}$ be a closed subspace of $L_1(\mu)$. It is clear that $L_1(\mu) = Y\oplus_1 Z$ and $Y$ is isometrically isomorphic to $L_1(\mu|_E)$. So $Y$ has the BPBp-$\nuu$ with $\eta(\eps)$.

Now, write $S_0 = T_0 |_Y: Y\longrightarrow Y$, consider $y_0 = f_0 \in S_Y$, $y_0^* = f_0^*|_Y\in S_{Y^*}$ and observe that
$y_0^*(y_0)=1$ and $|y_0^*(S_0y_0)| = |f_0^*(T_0f_0)|>1-\eta(\eps)$. Hence, there exist $S\in \mathcal{L}(Y)$ and $(\tilde y_0, \tilde y_0^*)\in \Pi(Y)$ such that
\[ |\tilde y_0^*(S \tilde y_0)|=1=v(S), \ \ \ \|S-S_0\|<\eps, \ \ \|y_0 - \tilde y_0\|<\eps \ \ \text{and} \ \ \|  y_0^*- \tilde y_0^*\|<\eps.\]
Finally consider the operator $T\in \mathcal{L}(L_1(\mu))$ given by
\[
T(y, z) = (Sy,0)+T_0(0, z)\qquad \bigl( (y, z) \in L_1(\mu) \equiv Y\oplus_1 Z\bigr)
.\]
We have $\|T\|=1$ (and so $v(T)=1$). Indeed,
$\|T(y,z)\| =\|(Sy, 0)\| + \|T_0(0,z)\|\leq \|y\|+\|z\|= \|(y, z)\| $ for all $(y,z)\in L_1(\mu)$ and $\|T(\tilde y_0, 0) \| = \|(S\tilde y_0,0)\|=\|S\tilde y_0\|=1$. Let $x=(\tilde y_0, 0)$ and $x^* = (\tilde y_0^*, f_0|_Z)$. Then $(x,x^*)\in \Pi(L_1(\mu))$. Moreover, we have
\begin{align*}
|x^*Tx|& = |\tilde y_0^* S y_0 | = 1 = v(T),\\
\|x-f_0\| &= \|y-y_0\|<\eps,\\
\|x_0^* - f_0^*\|& = \max\{ \|y-f_0^*|_Y\|, \|f_0^*|_Z - f_0^*|_Z\|\}= \|y^* - y_0^*\|<\eps\\ \intertext{and} \|T-T_0\|&= \sup_{\|y\|+\|z\|\leq 1} \|T(y,z) - T_0(y,z)\| = \sup_{\|y\|\leq 1} \|Sy- S_0y\| = \|S-S_0\|<\eps.
\end{align*}
This completes the proof.
\end{proof}

\section{Examples of spaces failing the Bishop-Phelps-Bollob\'{a}s property for numerical radius} \label{sec:counterexamples}

Our goal here is to prove that the density of numerical radius attaining operators does not imply the BPBp-$\nuu$. Actually, we will show that among separable spaces, there is no isomorphic property implying the BPBp-$\nuu$ other than finite-dimensionality.

We need to relate the BPBp-$\nuu$ with the Bishop-Phelps-Bollob\'{a}s property for operators which, as mentioned in the introduction, was introduced in \cite{AAGM2}. A pair $(X, Y)$ of Banach spaces has the {\it Bishop-Phelps-Bollob\'as property for operators} (in short, \emph{BPBp}), if given $\varepsilon >0$ there exists $\eta(\varepsilon)>0$ such that given $T\in {\mathcal{L}(X,Y)}$ with $\|T\|=1$ and $x \in S_X$ such that $\|Tx\|>1-\eta (\varepsilon)$, then there exist $z\in S_X$ and $S\in {\mathcal{L}(X,Y)}$ satisfying $$
\|S\|=\|Sz\|=1,\quad \|x - z\|<\eps\quad \text{and} \quad \|T-S\|<\eps.
$$
We refer the reader to \cite{AAGM2,ACKLM,CKLM} and references therein for more information and background. Among the interesting results on the BPBp, we emphasize that a pair $(X,Y)$ when $X$ is finite-dimensional does not necessarily have the BPBp. For instance, if $Y$ is a strictly convex space which is not uniformly convex, then the pair $(\ell_1^{(2)},Y)$ fails to have the BPBp (this is contained in \cite{AAGM2}, see \cite[Section 3]{ACKLM}).

The next result relates the BPBp-$\nuu$ with the BPBp for operators in a particular case. We will deduce our example from it.

\begin{theorem}\label{L1}
If $L_1(\mu)\oplus_1 X$ has the BPBp-$\nuu$, then the pair $(L_1(\mu),X)$ has the BPBp for operators.
\end{theorem}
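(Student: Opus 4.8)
The plan is to prove the contrapositive-free direction directly: assume $L_1(\mu)\oplus_1 X$ has the BPBp-$\nuu$ and extract the BPBp for operators on the pair $(L_1(\mu),X)$. The central idea is to encode an arbitrary operator $T\colon L_1(\mu)\to X$ together with a point $f\in S_{L_1(\mu)}$ where $\|Tf\|$ is nearly one into a \emph{numerical-radius} datum on the sum $E:=L_1(\mu)\oplus_1 X$, apply the BPBp-$\nuu$ hypothesis there, and then read off a norm-attaining approximant of $T$ from the numerical-radius-attaining approximant on $E$.

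First I would set up the encoding. Write a generic element of $E$ as $(f,x)$ with $\|(f,x)\|=\|f\|_1+\|x\|$, so that $E^*=L_\infty(\mu)\oplus_\infty X^*$ with the sup norm. Given $T\in\mathcal{L}(L_1(\mu),X)$ of norm one and $f\in S_{L_1(\mu)}$ with $\|Tf\|>1-\eta(\eps)$, I would build an operator $\widehat T\in\mathcal{L}(E)$ that ``routes $L_1(\mu)$ into $X$ via $T$'' and kills the $X$-coordinate, e.g. $\widehat T(f,x)=(0,Tf)$. A short computation using $n(L_1\oplus_1 X$-type structure$)$ should give $v(\widehat T)=\|\widehat T\|=\|T\|=1$; the cleanest route is to pick a norm-one functional $x^*\in S_{X^*}$ witnessing $\|Tf\|$ and consider the pair $\bigl((f,0),(0,x^*)\bigr)\in\Pi(E)$, whose numerical-radius value against $\widehat T$ equals $|x^*(Tf)|=\|Tf\|>1-\eta(\eps)$. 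The normalization $\|(f,0)\|=1$, $\|(0,x^*)\|=1$, and $\langle (0,x^*),(f,0)\rangle=0$ needs care: since the duality pairing there is $0$, not $1$, the naive pair is \emph{not} in $\Pi(E)$, and fixing this is where the real work lies.

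The main obstacle, then, is to produce a genuine element of $\Pi(E)$ whose numerical radius under $\widehat T$ is close to $1$. The natural fix is to use a point of the form $\bigl((\lambda f,\mu x),(\lambda^* g,\mu^* x^*)\bigr)$ with the coordinates chosen so that the pairing equals one while the operator's action remains concentrated on the $T$-part; one convenient choice is to take the point $u=(f,0)$ but the functional $u^*=(g,x^*)$ where $g\in S_{L_\infty(\mu)}$ satisfies $\langle g,f\rangle=1$ (such $g$ exists since $f\in S_{L_1(\mu)}$), giving $(u,u^*)\in\Pi(E)$ with $u^*\widehat T u=x^*(Tf)$, hence $|u^*\widehat T u|>1-\eta(\eps)$. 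I would then invoke the BPBp-$\nuu$ to get $(v,v^*,\widehat S)\in\Pi_{\nuu}(E)$ with $\|\widehat S-\widehat T\|<\eps$, $\|v-u\|<\eps$, $\|v^*-u^*\|<\eps$.

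Finally I would decode. Writing $v=(f_1,x_1)$, $v^*=(g_1,y_1^*)$ and $\widehat S$ in block form, the proximity $\|v-u\|<\eps$ forces $x_1$ small and $f_1$ close to $f$, while $\|\widehat S-\widehat T\|<\eps$ keeps the relevant block close to $T$; I would define the candidate $S\in\mathcal{L}(L_1(\mu),X)$ as the appropriate corner of $\widehat S$ (composition of $\widehat S$ with the inclusion $L_1(\mu)\hookrightarrow E$ and the projection $E\twoheadrightarrow X$), and check $\|S\|=\|S z\|=1$ at the normalized point $z=f_1/\|f_1\|$ using that $v(\widehat S)=|v^*\widehat S v|=1$ together with the $\oplus_1$ geometry. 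The delicate points I expect to grind through are (i) verifying that the norm-attainment on $E$ localizes to the $L_1(\mu)\to X$ corner rather than leaking into the $X\to X$ or $L_1\to L_1$ blocks, which should follow from the additive structure of the $\ell_1$-sum norm and the smallness of $x_1$, and (ii) bookkeeping the $\eps$'s so the final estimates come out below $\eps$ after an obvious rescaling of the modulus $\eta$.
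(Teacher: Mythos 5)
Your setup coincides exactly with the paper's: they too work on $E=L_1(\mu)\oplus_1 X$ with the operator $(f,x)\mapsto(0,T_Af)$, fix the pairing problem precisely as you do by taking the point $(g_0,0)$ against the functional $(g_0^*,x_0^*)$ with $\langle g_0^*,g_0\rangle=1$, and apply the BPBp-$\nuu$ hypothesis. (One refinement you miss but could add: the second coordinate $x_1$ of the new point is shown to vanish \emph{exactly}, not just to be small, via the additivity of the $\ell_1$-norm — their Claim~1 — and exactness matters because the BPBp demands exact norm attainment at the end.) The genuine gap is in your decoding step. From $v(\widehat S)=|v^*\widehat S v|=1$ you cannot conclude that the corner $S=P_X\circ \widehat S\circ\iota_{L_1}$ attains its norm at $z=g_1/\|g_1\|$: writing $\widehat S(f,0)=(D_1(f,0),D_2(f,0))$, the attainment identity reads $|g_1^*D_1(g_1,0)+x_1^*D_2(g_1,0)|=1$, so the leakage into the $L_1\to L_1$ block $D_1$ — which is only bounded by a multiple of $\eps$, not zero — enters the attainment itself. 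One only gets $\|D_2(g_1,0)\|=1-|g_1^*D_1(g_1,0)|$, which may be strictly smaller than $\sup_{f\in S_{L_1}}\|D_2(f,0)\|$, so the naive corner is in general \emph{not} norm attaining at $z$, and no bookkeeping of $\eps$'s repairs an exact equality. The paper's Claim~2 is exactly the missing work: they first deduce from the attainment that $|g_1^*|=1$ on the supports of $g_1$ and of $D_1(g_1,0)$, adjust $g_1^*$ to be unimodular everywhere, and then \emph{absorb} the $D_1$ block into the $X$-corner via the corrected operator $S_3(f,x)=\bigl(0,\,D_2(f,0)+g_1^*(D_1(f,0))\,D_2(g_1,0)/x_1^*D_2(g_1,0)\bigr)$, proving $\|S_3\|=\|S_3(g_1,0)\|=1$ by an $L_1$-specific argument (the multiplication isometry $G(f)=\overline{g_1^*}f$ together with approximation by simple functions), with $\|S_3-S_2\|<4\eps$. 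The final operator is a corner of this modified $S_3$, not of $\widehat S$.

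A secondary omission: before all this, the paper restricts to $A=\mathrm{supp}(f_0)$ and uses a stability lemma (BPBp-$\nuu$ passes to $\ell_1$-summands with the same modulus) to replace $L_1(\mu)\oplus_1 X$ by $L_1(\mu|_A)\oplus_1 X$, where the measure is $\sigma$-finite. This is what licenses treating the first component of $v^*$ as a genuine $L_\infty$ function — needed to make sense of $|g_1^*|=1$ pointwise and to define $G$ — since for non-$\sigma$-finite $\mu$ the dual of $L_1(\mu)$ need not be $L_\infty(\mu)$; at the end one extends back to $L_1(\mu)$ by keeping $T_0$ on $A^c$. Your proposal, working directly on $L_1(\mu)$, would stall at this representation issue even if the corner argument were fixed.
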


Before proving this proposition, we will use it to get the main examples of this section. The first example shows that the density of numerical radius attaining operators does not imply the BPBp-$\nuu$.

\begin{example}\label{exam-first}
{\slshape There is a reflexive space (and so numerical radius attaining operators on it are dense) which fails to have the BPBp-$\nuu$.\ } Indeed, let $Y$ be a reflexive separable space which is not superreflexive and we may suppose that $Y$ is strictly convex. Observe that $Y$ cannot be uniformly convex since it is not superreflexive. Now, $X=\ell_1^{(2)}\oplus_1 Y$ is reflexive, but the pair $(\ell_1^{(2)},Y)$ fails the BPBp since $Y$ is strictly convex but not uniformly convex \cite[Corollary~3.3]{ACKLM}. Therefore, Theorem~\ref{L1} gives us that $X$ does not have the BPBp-$\nuu$.
\end{example}

The example above can be extended to get the result that every infinite-dimensional separable Banach space can be renormed to fail the BPBp-$\nuu$. This follows from the fact that every infinite-dimensional separable Banach space can be renormed to be strictly convex but not uniformly convex (this result can be proved ``by hand''; an alternative categorical argument for it can be found in \cite{God} and references therein). With a little more of effort, we may get the main result of the section.

\begin{theorem}\label{thr-renorming}
Every infinite-dimensional separable Banach space can be renormed to fail the weak-BPBp-$\nuu$ (and so, in particular, to fail the BPBp-$\nuu$).
\end{theorem}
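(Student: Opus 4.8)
The plan is to upgrade the single reflexive example (Example~\ref{exam-first}) into a renorming statement valid for every infinite-dimensional separable space $E$. The strategy has two logically separate ingredients. First, I would invoke the known renorming fact that any infinite-dimensional separable Banach space admits an equivalent norm that is strictly convex but \emph{not} uniformly convex; call the resulting space $Y$. The point of strict-convex-but-not-uniformly-convex is precisely that it feeds the obstruction to the BPBp for operators: by \cite[Corollary~3.3]{ACKLM}, the pair $(\ell_1^{(2)},Y)$ fails the BPBp. Second, I would set $X = \ell_1^{(2)} \oplus_1 Y$ and apply Theorem~\ref{L1}: if $X = L_1(\mu)\oplus_1 Y$ (with $\mu$ counting measure on two points, so $L_1(\mu)=\ell_1^{(2)}$) had the BPBp-$\nuu$, then $(\ell_1^{(2)},Y)$ would have the BPBp, a contradiction. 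Hence $X$ fails the BPBp-$\nuu$.

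The remaining work is to see that this $X$ is an equivalent renorming of the \emph{given} space $E$, not just of $Y$, and that the failure is of the \emph{weak}-BPBp-$\nuu$ rather than only of the full BPBp-$\nuu$. For the first point, I would observe that $Y$ is itself a renorming of $E$ (same underlying vector space, equivalent norm), so $\ell_1^{(2)}\oplus_1 Y$ is isomorphic to $\mathbb{K}^2 \oplus E$; since $E$ is infinite-dimensional, $\mathbb{K}^2\oplus E$ is linearly isomorphic to $E$ itself, and transporting the $\oplus_1$-norm back through this isomorphism realizes $X$ as an equivalent renorming of $E$. This is the ``little more effort'' alluded to in the text, and it is the only genuinely delicate bookkeeping step: one must exhibit an explicit linear isomorphism $E\cong \mathbb{K}^2\oplus E$ (splitting off a two-dimensional complemented subspace, which is always possible in infinite dimensions) so that the new norm on $E$ is exactly the pulled-back $\oplus_1$ norm.

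For the upgrade from BPBp-$\nuu$ to weak-BPBp-$\nuu$, the key is that $X=\ell_1^{(2)}\oplus_1 Y$ is an $L_1$-type gluing whose numerical index one might hope to control; more robustly, I would strengthen Theorem~\ref{L1} (or inspect its proof, which I am allowed to assume) to conclude the weaker hypothesis suffices: that is, I expect that the construction deriving a BPBp pair from a BPBp-$\nuu$ space only uses operators $S$ normalized by numerical radius, so that the \emph{weak}-BPBp-$\nuu$ of $X$ already forces the BPBp of $(L_1(\mu),X)$. Granting that, the contrapositive gives that $X$ fails the weak-BPBp-$\nuu$, and since the weak version is implied by the full version (Proposition~\ref{prop-nxpositive} direction, or trivially by the normalization comment), failure of the weak version entails failure of the full BPBp-$\nuu$.

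The main obstacle I anticipate is precisely this last strengthening: verifying that Theorem~\ref{L1} remains valid with ``BPBp-$\nuu$'' weakened to ``weak-BPBp-$\nuu$'' in the hypothesis, since $X$ here need not have numerical index bounded away from zero (indeed $Y$ was chosen with bad convexity, so Proposition~\ref{prop-nxpositive} is unavailable to convert weak-BPBp-$\nuu$ into BPBp-$\nuu$ for free). I would therefore trace through the proof of Theorem~\ref{L1} and check that the operator it produces witnessing the BPBp can be built from numerical-radius-attaining (rather than norm-one-numerical-radius) data, which is plausible because the argument presumably lives inside the $L_1(\mu)$ summand where numerical radius and norm coincide. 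The renorming bookkeeping of the second paragraph is routine by comparison.
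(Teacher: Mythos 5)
Your first ingredient --- setting $X=\ell_1^{(2)}\oplus_1 Y$ with $Y$ strictly convex but not uniformly convex, invoking \cite[Corollary~3.3]{ACKLM} together with Theorem~\ref{L1} to kill the BPBp-$\nuu$, and splitting off a two-dimensional complemented subspace to realize this as a renorming of the given space --- is sound and essentially matches the paper (the paper simplifies the bookkeeping by choosing $Y$ to be a codimension-two closed subspace of the given space itself, so that $\ell_1^{(2)}\oplus_1(Y,|\cdot|)$ is automatically an equivalent renorming, with no isomorphism $E\cong\K^2\oplus E$ to manage). The genuine gap is in the upgrade from failure of the BPBp-$\nuu$ to failure of the \emph{weak}-BPBp-$\nuu$. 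You leave the decisive step --- that Theorem~\ref{L1} survives with ``weak-BPBp-$\nuu$'' in the hypothesis --- as an expectation (``I expect'', ``plausible'') rather than a proof, and verifying it is not routine: both the stability Lemma~\ref{thm:stability} and the proof of Theorem~\ref{L1} use the exact normalization $v(S_1)=1$ repeatedly (in Claim~1 to force $x_1=0$, and in the chain $v(S_2)\geq\|S_3\|\geq |(g_1^*,x_1^*)S_3(g_1,0)|=v(S_2)=1$ yielding $\|S_3\|=\|S_3(g_1,0)\|=1$), so each such equality would have to be replaced by an approximate version and the final operator renormalized. As written, your argument does not establish the weak statement.

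Moreover, the premise on which you set aside the paper's actual route is mistaken: you claim Proposition~\ref{prop-nxpositive} is unavailable because ``$Y$ was chosen with bad convexity'', but strict-without-uniform convexity is perfectly compatible with positive numerical index. The paper exploits exactly this. By \cite[Proposition~2]{FMP03} the numerical index is continuous on the space of equivalent norms on $Y$ (in Banach-Mazur distance) and by \cite[Theorem~9]{FMP03} its set of values is a nontrivial interval; Lemma~\ref{lemma-vladimir} shows the strictly convex, non-uniformly-convex norms are \emph{dense} in that metric space. Combining density with continuity produces a norm $|\cdot|$ on $Y$ that is simultaneously strictly convex, not uniformly convex, and has $n(Y,|\cdot|)>0$. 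Then $n\bigl(\ell_1^{(2)}\oplus_1(Y,|\cdot|)\bigr)=\min\bigl\{n(\ell_1^{(2)}),n(Y,|\cdot|)\bigr\}>0$, and Proposition~\ref{prop-nxpositive} converts the failure of the BPBp-$\nuu$ (which your first part does prove) directly into failure of the weak-BPBp-$\nuu$, with no modification of Theorem~\ref{L1} at all. To complete your version you would have to actually carry out the strengthened Theorem~\ref{L1}; otherwise the density-plus-continuity argument is the missing idea.
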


We need the following result which is surely well known. As we have not found a reference, we include a nice and easy proof kindly given to us by Vladimir Kadets. We recall that, given a Banach space $Y$, the set of all equivalent norms on $Y$ can be viewed as a metric space using the Banach-Mazur distance.

\begin{lemma}\label{lemma-vladimir}
Let $Y$ be an infinite-dimensional separable Banach space. Then the set of equivalent norms on $Y$ which are strictly convex and are not (locally) uniformly convex is dense in the set of all equivalent norms on $Y$ (with respect to the Banach-Mazur distance).
\end{lemma}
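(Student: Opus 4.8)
The statement to prove is Lemma~\ref{lemma-vladimir}: given an infinite-dimensional separable Banach space $Y$, the equivalent norms that are strictly convex but not locally uniformly convex form a dense subset of all equivalent norms, in the Banach--Mazur distance.

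The plan is to start from an arbitrary equivalent norm $\norm{\cdot}$ on $Y$ and, for each prescribed tolerance, produce a nearby norm that is simultaneously strictly convex and fails local uniform convexity. The natural strategy is to exploit separability via a fixed biorthogonal-type system or a sequence in the unit sphere whose closed linear span is all of $Y$. Concretely, I would first use a classical renorming fact: every separable Banach space admits an equivalent strictly convex norm, and in fact one can approximate any given norm by a strictly convex one (for example by a Day-type averaging construction using a bounded injective operator $J\colon Y\to \ell_2$ and setting $\normt{y}^2 = \norm{y}^2 + \eps^2\norm{Jy}_2^2$, which is strictly convex and within Banach--Mazur distance controlled by $\eps$ of the original norm). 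So the easy half, getting arbitrarily close by strictly convex norms, is standard.

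The real content is to \emph{simultaneously} destroy local uniform convexity while keeping strict convexity and staying close. Here I would fix a single point $e\in S_Y$ (in the new norm) and a sequence $(y_n)$ on the sphere converging weakly but not in norm to $e$ — such a sequence exists because $Y$ is infinite-dimensional, by Josefson--Nissenzweig or simply by choosing a normalized basic sequence. Local uniform convexity at $e$ would force $\norm{y_n - e}\to 0$ whenever $\norm{(y_n+e)/2}\to 1$; so the idea is to perturb the strictly convex norm by a term that flattens the sphere infinitesimally along the direction of this sequence near $e$, arranging that $\norm{(y_n+e)/2}\to 1$ while $\norm{y_n-e}\not\to 0$. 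One clean way is to add to the strictly convex norm a small seminorm coming from a single functional $f\in S_{Y^*}$ that attains its norm at $e$, and to combine it so that the resulting norm still has the strictly convex term dominating generically (preserving strict convexity) but loses the \emph{uniform} modulus at $e$. Kadets's argument presumably builds the failure of LUR at one point by an explicit perturbation supported near that point.

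The main obstacle I anticipate is the tension between the two requirements: strict convexity is a pointwise non-degeneracy condition at \emph{every} point of the sphere, whereas one wants to engineer degeneracy of the \emph{uniform} modulus at some point. The delicate step is to verify that the perturbation that kills local uniform convexity at $e$ does not accidentally create a flat segment (a line segment in the sphere), which would violate strict convexity. I would handle this by keeping the strictly convex averaging term strictly active — e.g. writing the final norm as $\normt{y}^2 = \norm{y}_{sc}^2 + \phi(y)$ where $\norm{\cdot}_{sc}$ is strictly convex and $\phi\ge 0$ is a convex function vanishing to high order along the chosen direction at $e$ — so that any two distinct points of the unit sphere still have $\norm{\cdot}_{sc}$ separating their midpoint strictly below one, guaranteeing strict convexity, while the vanishing of $\phi$ along the test sequence kills the uniform estimate. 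The Banach--Mazur estimate is then just a matter of bounding the perturbation by the tolerance parameter, which is routine once the construction is fixed.
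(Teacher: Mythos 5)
There is a genuine gap, and it sits exactly where you flagged the ``delicate step,'' but the problem is structural rather than technical: your final template $\normt{y}^2=\norm{y}_{sc}^2+\phi(y)$, with $\phi\geq 0$ convex, can \emph{never} fail local uniform convexity at $e$ when $\norm{\cdot}_{sc}$ is LUR at $e$ --- and your chosen $\norm{\cdot}_{sc}$, built as $\bigl(\norm{y}^2+\eps^2\norm{Jy}_2^2\bigr)^{1/2}$, contains the \emph{original} norm squared at full weight. Since the lemma asserts density in the set of \emph{all} equivalent norms, you must in particular approximate uniformly convex norms (e.g.\ the canonical norm of a separable Hilbert space), and there your construction provably produces LUR norms: if $\normt{x_n}=\normt{e}=1$ and $\normt{(x_n+e)/2}\to 1$, then writing $\normt{y}^2=\norm{y}^2+\rho(y)$ with $\rho\geq 0$ convex, the inequality $\rho\bigl((x_n+e)/2\bigr)\leq \tfrac12\bigl(\rho(x_n)+\rho(e)\bigr)$ yields $\norm{(x_n+e)/2}-\tfrac12\bigl(\norm{x_n}+\norm{e}\bigr)\to 0$ and $\norm{x_n}-\norm{e}\to 0$, whence uniform convexity of $\norm{\cdot}$ forces $\norm{x_n-e}\to 0$. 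Adding nonnegative convex terms can only improve convexity-type moduli; it cannot ``flatten the sphere.'' The vanishing of $\phi$ along your test directions removes nothing, because the curvature of $\norm{\cdot}_{sc}^2$ is still fully present there. Flatness has to be manufactured in the \emph{dominant} summand, not hoped for from a degenerate additive perturbation.

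This is precisely how the paper's proof (due to Kadets) is organized, with the two roles assigned in the reverse of your scheme. The dominant part is $q(y)=\max\bigl\{(1-\eps)\norm{y},\,|e_1^*(y)|\bigr\}$, which has a genuine flat face through $e$ (destroying all curvature of the original norm near $e$, even if that norm was uniformly convex), and the strictly convex summand $p(y)=\bigl(\sum_k 2^{-k}|e_k^*(y)|^2\bigr)^{1/2}$ is chosen to be \emph{asymptotically degenerate} along the test sequence: picking norm-one $y_n\in\bigcap_{k=1}^{n}\ker e_k^*$ gives $p(y_n)\to 0$, so for $e_n=e+\tfrac{\eps}{4}y_n$ the norm $\norm{y}_1=(1-\eps)q(y)+\eps\,p(y)/p(e)$ satisfies $\norm{e_n}_1\to 1$, $\norm{e+e_n}_1\to 2$, yet $\norm{e-e_n}_1$ stays bounded below --- strict convexity survives pointwise because $p$ is strictly convex everywhere, while LUR fails because $p$'s contribution dies along $(y_n)$. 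So it is the strictly convex summand that must vanish along the chosen directions, not the perturbation $\phi$. A secondary error: your test sequence ``converging weakly but not in norm to $e$'' need not exist --- in a Schur space such as $\ell_1$ (under any equivalent norm) every weakly convergent sequence converges in norm, and Josefson--Nissenzweig concerns weak$^*$-null sequences of \emph{functionals}, not vectors. The paper needs only norm-one vectors in finite-codimensional kernels, which exist in any infinite-dimensional space with no weak convergence assertion whatsoever.
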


\begin{proof}
Fix $e\in S_Y$ and $e_1^*\in S_{Y^*}$ such that $e_1^*(e)=1$. For a fixed $\eps\in (0,1/2)$, denote
$$
q(y)=\max\bigl\{(1-\eps)\|y\|,|e_1^*(y)|\bigr\} \quad (y\in Y).
$$
Evidently,
$(1-\eps)\|y\|\leq q(y)\leq \|y\|$ for every $y\in Y$. Fix a sequence $\{e_k^*\,:\,k\geq 2\}$ of norm-one functionals separating the points of $Y$, and denote
$$
p(y)=\sqrt{\sum_{k=1}^\infty \frac{1}{2^k}|e_k^*(y)|^2} \quad (y\in Y).
$$
Then, $p$ is a strictly convex norm on $Y$, $p(e)\geq \frac{1}{\sqrt{2}}$ and $p(y)\leq \|y\|$ for all $y\in X$. Finally, write
$$
\|y\|_1=(1-\eps)q(y) + \eps \dfrac{p(y)}{p(e)} \quad (y\in Y).
$$
Then, $\|\cdot\|_1$ is a strictly convex norm on $Y$ and
$$
(1-\eps)^2\|y\| \leq \|y\|_1 \leq (1+\eps)\|y\| \quad (y\in Y).
$$
We will finish the proof by showing that $\|\cdot\|_1$ is not uniformly convex (actually, it is not locally uniformly convex). Indeed, for each $n\in \N$ we select $y_n\in \bigcap_{k=1}^n \ker e_k^*$ with $\|y_n\|=1$ and consider $e_n=e+\frac{\eps}{4}y_n$. Then, $q(e)=1$, $q(e_n)=1$, and $q(e+e_n)=2$. At the same time, $p(y_n)\longrightarrow 0$, so $p(e_n)\longrightarrow p(e)$ and $p(e+e_n)\longrightarrow 2 p(e)$. Consequently,
$$
\|e\|_1=1,\quad \|e_n\|_1\longrightarrow 1,\quad \text{and}\quad \|e+e_n\|_1\longrightarrow 2,
$$
but $\|e-e_n\|_1=\frac{\eps}{4}\|y_n\|_1\geq (1-\eps)^2\frac{\eps}{4}$, which means the absence of local uniform convexity at $e$.
\end{proof}

\begin{proof}[Proof of Theorem~\ref{thr-renorming}]
Let $X$ be an infinite-dimensional separable Banach space. Take a closed subspace $Y$ of $X$ of codimension two.
By \cite[Propositon~2]{FMP03}, the map carrying every equivalent norm on $Y$ to its numerical index is continuous and so, the set of values of the numerical index of $Y$ up to reforming is a non-trivial interval \cite[Theorem~9]{FMP03}. Then Lemma~\ref{lemma-vladimir}  allows us to find an equivalent norm $|\cdot|$ on $Y$ in such a way that $(Y,|\cdot|)$ is strictly convex, is not uniformly convex, and $n(Y,|\cdot|)>0$. Now, the space $\widetilde{X}=\ell_1^{(2)}\oplus_1 (Y,|\cdot|)$ is an equivalent renorming of $X$ which does not have the BPBp-$\nuu$ (indeed, otherwise, the pair $\bigl(\ell_1^{(2)},(Y,|\cdot|)\bigr)$ would have the BPBp for the operator norm and so, $(Y,|\cdot|)$ would be uniformly convex by \cite[Corollary~3.3]{ACKLM}, a contradiction.) Moreover, as
$$
n\bigl(\widetilde{X}\bigr)=\min\bigl\{n(\ell_1^{(2)}),n(Y,|\cdot|)\bigr\}>0
$$
(see \cite[Proposition~2]{KMP} for instance), $\widetilde{X}$ also fails the weak-BPBp-$\nuu$ by Proposition~\ref{prop-nxpositive}.
\end{proof}

To finish the section with the promised proof of Theorem~\ref{L1}, we first state the following stability result.

\begin{lemma}\label{thm:stability} Let $X=\big[\bigoplus_{k=1}^\infty X_k\big]_{c_0}$ or $\big[\bigoplus_{k=1}^\infty X_k\big]_{\ell_1}$. If $X$ has the Bishop-Phelps-Bollob\'as property for numerical radius with a function $\eta$, then each Banach space $X_i$ has the Bishop-Phelps-Bollob\'as property for numerical radius with $\eta_{nu}(X_i)\geq \eta$. That is, $\inf_i \eta_{nu}(X_i)(\eps)\geq \eta_{nu}(X)(\eps)$ for all $0<\eps<1$.
\end{lemma}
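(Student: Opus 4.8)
The plan is to prove the statement for the $\ell_1$-sum case in detail and remark that the $c_0$-sum case is entirely analogous. Fix an index $i$ and suppose we are given $T_i \in \mathcal{L}(X_i)$ with $v(T_i)=1$ together with $(u, u^*) \in \Pi(X_i)$ satisfying $|u^*T_i u| > 1 - \eta(\eps)$. The key idea is to lift this data into $X = \big[\bigoplus_{k=1}^\infty X_k\big]_{\ell_1}$ via the natural norm-one inclusion $J_i \colon X_i \hookrightarrow X$ and the norm-one projection $P_i \colon X \to X_i$, and then to push the solution furnished by the BPBp-$\nuu$ of $X$ back down to $X_i$.

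First I would build the lifted operator $T = J_i T_i P_i \in \mathcal{L}(X)$, which acts as $T_i$ on the $i$-th coordinate and kills all other coordinates. The crucial observation I would verify is that $v(T) = v(T_i) = 1$: the inequality $v(T) \leq \|T\| = \|T_i\| \le 1$ is clear, while testing $T$ against lifted states of the form $(J_i x, J_i^* x^*)$ for $(x,x^*) \in \Pi(X_i)$ — noting $J_i^* = P_i^*$ identifies $X_i^*$ isometrically inside $X^*$ and preserves the pairing $\langle J_i^* x^*, J_i x\rangle = x^*(x)=1$ — shows $v(T) \geq v(T_i)$. Lifting the given near-optimal state $(J_i u, J_i^* u^*) \in \Pi(X)$, I get $|\langle J_i^* u^*, T J_i u\rangle| = |u^* T_i u| > 1 - \eta(\eps)$. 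Since $X$ has the BPBp-$\nuu$ with modulus $\eta$, there exist $S \in \mathcal{L}(X)$ and $(y, y^*) \in \Pi(X)$ with $v(S) = |y^* S y| = 1$, and $\|S - T\| < \eps$, $\|y - J_i u\| < \eps$, $\|y^* - J_i^* u^*\| < \eps$.

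Next I would compress the solution back to $X_i$ by setting $S_i = P_i S J_i \in \mathcal{L}(X_i)$ and $(y_i, y_i^*) = (P_i y,\, J_i^* y^*)$, and check that this compressed triple is the desired near-solution in $X_i$. The distance estimates come for free from the norm-one projections: $\|S_i - T_i\| = \|P_i(S-T)J_i\| \le \|S-T\| < \eps$, and similarly $\|y_i - u\| = \|P_i(y - J_i u)\| < \eps$ and $\|y_i^* - u^*\| < \eps$. The main obstacle — and the step requiring genuine care — is to argue that the compressed triple actually lies in $\Pi_{\nuu}(X_i)$, i.e. that $v(S_i) = |y_i^* S_i y_i| = 1$ and $(y_i, y_i^*) \in \Pi(X_i)$. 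I expect this to be the delicate point because compression is not automatically numerical-radius preserving, so I would instead exploit the special block structure of $\ell_1$- and $c_0$-sums: because $y^* S y$ attains the numerical radius $1$ of $S$ and $y^* \in S_{X^*}$ pairs with $y \in S_X$ to give $1$, the extremality forces the relevant mass of $y$ and $y^*$ to concentrate in coordinate $i$ in the limit, so that the off-diagonal contributions of $S$ are negligible for the numerical radius.

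To handle this rigorously I would avoid compression altogether and instead apply the hypothesis directly to a state concentrated on coordinate $i$, exploiting that in an $\ell_1$-sum the extreme points of $\Pi(X)$ decompose coordinatewise; concretely, I would observe that $\eta_{\nuu}(X)(\eps) \le \eta_{\nuu}(X_i)(\eps)$ is equivalent to the contrapositive statement that a near-counterexample in $X_i$ lifts to a genuine near-counterexample in $X$ whose solutions cannot be better than $\eps$-close when restricted to coordinate $i$. The cleanest route is: take any valid $\eta$ for $X$; given near-optimal data in $X_i$, lift as above, apply the BPBp-$\nuu$ of $X$, and then \emph{define} the $X_i$-operator as $S_i = P_i S J_i$ while taking $(y_i, y_i^*)$ to be the genuine optimal state of $S_i$ guaranteed by finite attainment after a further harmless perturbation. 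Since all parameters are controlled uniformly and $\inf_i$ is taken at the end, the bound $\inf_i \eta_{\nuu}(X_i)(\eps) \ge \eta_{\nuu}(X)(\eps)$ follows by taking the supremum over admissible $\eta$, which completes the argument.
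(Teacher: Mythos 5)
Your setup (lifting $T_i$ to $T=J_iT_iP_i$, applying the BPBp-$\nuu$ of $X$, and compressing back via $S_i=P_iSJ_i$ and $(P_iy,J_i^*y^*)$) is exactly the paper's scheme, and the distance estimates are indeed immediate. But the step you yourself flag as ``the main obstacle'' --- that $(P_iy,J_i^*y^*)\in\Pi(X_i)$ and $|J_i^*y^*(S_iP_iy)|=v(S_i)=1$ --- is never actually proved, and your fallback is circular. Invoking ``the genuine optimal state of $S_i$ guaranteed by finite attainment after a further harmless perturbation'' fails because $X_i$ is an arbitrary Banach space: $S_i$ need not attain its numerical radius at all, and producing a nearby operator that attains it at a state close to $(u,u^*)$ is precisely the BPBp-$\nuu$ for $X_i$, i.e.\ the statement being proved. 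Similarly, ``extreme points of $\Pi(X)$ decompose coordinatewise'' says nothing about the particular pair $(y,y^*)$ delivered by the hypothesis, which need not be extreme, and your ``mass concentrates in coordinate $i$ in the limit'' is too weak. The paper's argument at this point is exact, not asymptotic: in the $\ell_1$ case, $\|P_j'y^*\|=\|P_j'y^*-P_j'x^*\|<\eps$ for $j\neq i$, and feeding this into $1=y^*(y)=\sum_j P_j'y^*(P_jy)\leq \|P_iy\|+\eps\sum_{j\neq i}\|P_jy\|\leq 1$ forces $\|P_iy\|=1$ and $P_jy=0$ \emph{exactly} for $j\neq i$, so the compressed pair lies in $\Pi(X_i)$. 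Even then one more idea is needed: since $\bigl(Q_iP_iy,\,Q_i'P_i'y^*+\tfrac{1}{\eps}(y^*-Q_i'P_i'y^*)\bigr)\in\Pi(X)$, its value against $S$ is at most $v(S)=1$, and writing $1=|y^*S(Q_iP_iy)|$ as the modulus of the convex combination $(1-\eps)(Q_i'P_i'y^*)S(Q_iP_iy)+\eps\bigl(Q_i'P_i'y^*+\tfrac{1}{\eps}(y^*-Q_i'P_i'y^*)\bigr)S(Q_iP_iy)$ forces $|(Q_i'P_i'y^*)S(Q_iP_iy)|=1$; combined with $v(S_i)\leq v(S)=1$ (every state of $X_i$ lifts to a state of $X$), this yields $|y_i^*S_iy_i|=v(S_i)=1$. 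None of this appears in your proposal, and it is the heart of the lemma.

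A smaller but genuine error: your verification of $v(T)=1$ rests on $v(T)\leq\|T\|=\|T_i\|\leq 1$, but $v(T_i)=1$ does \emph{not} give $\|T_i\|\leq 1$; the numerical radius is only a seminorm dominated by the operator norm, and the gap between them is governed by the numerical index, which may even vanish. The correct route is to show $v(T)\leq v(T_i)$ directly: for $(z,z^*)\in\Pi(X)$ in the $\ell_1$ case one has $\sum_j P_j'z^*(P_jz)=1$ with $\sum_j\|P_jz\|=1$, so $P_j'z^*(P_jz)=\|P_jz\|$ for every $j$; hence if $P_iz\neq 0$ then $\bigl(P_iz/\|P_iz\|,\,P_i'z^*\bigr)\in\Pi(X_i)$ and $|z^*Tz|=\|P_iz\|\,\bigl|P_i'z^*\bigl(T_i(P_iz/\|P_iz\|)\bigr)\bigr|\leq v(T_i)$, while $z^*Tz=0$ if $P_iz=0$ (the $c_0$ case is symmetric). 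This is fixable, but as written your justification is false.
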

\begin{proof}
Let $P_i~:~X\longrightarrow X_i$ and $P_i'~:~X^*\longrightarrow X^*_i$ be the natural projections, and let $Q_i~:~X_i\longrightarrow X$ and $Q_i'~:~X^*_i\longrightarrow X^*$ be the natural embeddings.

Assume that an operator $T_i~:~X_i\longrightarrow X_i$ and a pair $(x_i,x^*_i)\in \Pi(X_i)$ satisfy that
$$v(T_i)=1\ \ \ \text{~and~}\ \ \ |x^*_i T_i x_i|>1-\eta(\eps).$$
We define an operator $T~:~X\longrightarrow X$ and $(x,x^*)\in \Pi(X)$  by
$$T=Q_i\circ T_i\circ P_i\ \ \ \text{~and~}\ \ \  (x,x^*)=(Q_ix_i,Q_i'x^*_i),$$
then clearly we have that
$$|x^* T x|=|x^*_i T_i x_i|>1-\eta(\eps).$$
From the assumption, there exist $S~:~X\longrightarrow X$ and a pair $(y,y^*)\in \Pi(X)$ such that
$$|y^*Sy|=1=v(S),~\|S-T\|<\varepsilon,~\|y^*-x^*\|<\varepsilon, \text{~and~}\|y-x\|<\varepsilon.$$
Since this clearly shows that
$$\|P_i\circ S\circ Q_i-T_i\|<\varepsilon,~\|P_i'y^*-x^*_i\|<\varepsilon,\text{~and~}\|P_iy-x_i\|<\varepsilon,$$
we only need to show that $|P_i'y^* (P_i\circ S \circ Q_i) P_iy|=1$.\\

We first show the case of $c_0$ sum. Since $\|P_jy\|=\|P_jy-P_jx\|\leq\|y-x\|<\varepsilon$ for every $j\neq i$, we have

 \begin{align*}
1
&=y^*(y)=\sum_{j\in\mathbb{N}} P_j'y^*(P_jy)\leq \sum_{j\in\mathbb{N}} \|P_j'y^*\|\|P_jy\|\\
&\leq \|P_i'y^*\|+ \varepsilon \sum_{j\in\mathbb{N},~j\neq i} \|P_j'y^*\|\leq\|y^*\|= 1.
\end{align*}
This shows that $\|P_i'y^*\|=1$ and $P_j'y^*=0$ for every $j\neq i$.  So $y^* = Q_i'P_i'y^*$ and $P_i'y^*(P_i y)=1$. This and the fact that $\|y-Q_iP_iy\|<\eps$ imply that $(Q_iP_i y+ {{1}\over{\varepsilon}}(y-Q_iP_iy),Q_i'P_i'y^*)\in \Pi(X)$. So we get that $(Q_i'P_i'y^*)S(Q_iP_i y+ {{1}\over{\varepsilon}}(y-Q_iP_iy))\leq v(S)=1$.
Hence, we have
\begin{align*}
1
&=|y^*Sy|=|(Q_i'P_i'y^*)Sy|\\
&=\left|(1-\varepsilon) (Q_i'P_i'y^*)S (Q_iP_i y)+\varepsilon(Q_i'P_i'y^*)S\left(Q_iP_i y+ {{1}\over{\varepsilon}}(y-Q_iP_iy)\right)\right|\leq 1,
\end{align*}
and so we get $|P_i'y^* (P_i\circ S \circ Q_i) P_iy|=|(Q_i'P_i'y^*)S (Q_iP_i y)|=1$.\\

We next show the case of $\ell_1$ sum. The proof is almost the same as that of the $c_0$ case. However,  for the sake of completeness,  we provide it here.

Since $\|P_j'y^*\|=\|P_j'y^*-P_j'x^*\|\leq\|y^*-x^*\|<\varepsilon$ for every $j\neq i$, we have

 \begin{align*}
1
&=y^*(y)=\sum_{j\in\mathbb{N}} P_j'y^*(P_jy)\leq \sum_{j\in\mathbb{N}} \|P_j'y^*\|\|P_jy\|\\
&\leq \|P_iy\|+ \varepsilon \sum_{j\in\mathbb{N},~j\neq i} \|P_jy\|\leq \|y\|=1,
\end{align*}
which shows $\|P_iy\|=1$ and $P_jy=0$ for every $j\neq i$. Since this implies $(Q_iP_iy,Q_i'P_i' y^*+ {{1}\over{\varepsilon}}(y^*-Q_i'P_i'y^*))\in \Pi(X)$, we get that $\left|(Q_i'P_i' y^*+ {{1}\over{\varepsilon}}(y^*-Q_i'P_i'y^*))S(Q_iP_iy)\right|\leq v(S)=1$.
Hence, we have
\begin{align*}
1
&=|y^*Sy|=|y^*S(Q_iP_iy)|\\
&=|(1-\varepsilon) (Q_i'P_i'y^*)S (Q_iP_i y)+\varepsilon(Q_i'P_i' y^*+ {{1}\over{\varepsilon}}(y^*-Q_i'P_i'y^*))S(Q_iP_iy)\leq 1,
\end{align*}
and so we get $|P_i'y^* (P_i\circ S \circ Q_i) P_iy|=|(Q_i'P_i'y^*)S (Q_iP_i y)|=1$.
\end{proof}

\begin{proof}[Proof of Theorem~\ref{L1}] Note that $\eta_{\nuu}(L_1(\mu)\oplus_1 X)(\varepsilon)\longrightarrow 0$  as $\varepsilon\longrightarrow 0$. Fix $0<\eps_0<1$ and choose $0<\eps<1$ such that $6\eps + \eta_{\nuu}(L_1(\mu)\oplus_1 X)(\varepsilon)<\eps_0$. Let $\eta(\varepsilon_0)=\eta_{\nuu}(L_1(\mu)\oplus_1 X)(\varepsilon)$.

Suppose that $T_0\in \mathcal{L}(L_1(\mu),X)$ with $\|T_0\|=1$ and $f_0\in S_{L_1(\mu)}$ satisfy
$$
\|T_0f_0\|>1-\eta(\varepsilon_0).
$$
For any measurable subset $B$, let $L_1(\mu|_B)=\{ f|_B : f\in L_1(\mu)\}$ with the norm $\|f|_B\|=\|f\chi_B\|_1$. Then it is easy to see that $L_1(\mu|_B)$ is isometrically isomorphic to a complemented subspace of $L_1(\mu)$. Let  $P_B: L_1(\mu) \longrightarrow L_1(\mu|_B)$ be the restriction defined by $P_B(f) = f|_B$ for all $f\in L_1(\mu)$ and let $J_B:L_1(\mu|_B)\longrightarrow L_1(\mu)$ be the extension defined by $J_B(f)(\omega) = f(\omega)$ if $\omega\in B$ and $J_B(f)(\omega) =0$ otherwise. It is clear that $P_BJ_B = {\rm Id}_{L_1(\mu|_B)}$ and $J_BP_B(f) = f\chi_B$  for all $f\in L_1(\mu)$. Notice also that $L_1(\mu)$ is isometrically isomorphic to $L_1(\mu|_B)\oplus_1 L_1(\mu|_{B^c})$.

Let $A={\rm supp} f_0$ and $g_0=P_Af_0$.  Then $\|T_0J_Ag_0\|= \|T_0f_0\|>1-\eta(\varepsilon_0)>0$ and define the operator $T_A~:~L_1(\mu|_A)\longrightarrow X$ by $T_Af={{T_0J_Af}\over{\|T_0J_A\|}}$ for every $f\in L_1(\mu|_A)$. Then,
 $$
\|T_Ag_0\|\geq \|T_0f_0\|>1-\eta(\varepsilon_0).
$$
Since $\mu|_A$ is $\sigma$-finite, $L_1(\mu|_A)^* = L_\infty(\mu|_A)$. Let $g_0^*\in S_{L_\infty(\mu|_A)}$ be a function such that $\inner{g_0^*,g_0}=1$, and choose $x_0^*\in S_{X^*}$ such that $x_0^*(T_Ag_0)=\|T_Ag_0\|$. Define the operator $S_0\in \mathcal{L}(L_1(\mu|_A)\oplus_1 X)$ by
$$
S_0(f,x)=(0,T_Af) \qquad \bigl((f,x)\in L_1(\mu|_A)\oplus_1 X\bigr)
$$
and observe that $\|S_0\|=v(S_0)=1$.  Indeed,
 \begin{align*}\|S_0\|\leq 1=\|T_A\|&=\sup \{|x^*T_Af|~:~x^*\in S_{X^*}, f\in S_{L_1(\mu|_A)}\}\\
 &=\sup \{|(f^*,x^*)S_0(f,x)| ~:~((f^*,x^*),(f,x))\in \Pi(L_1(\mu|_A)\oplus_1 X)\}\\
 &=v(S_0)\leq \|S_0\|.
 \end{align*}
It is immediate that
$$
(g_0^*,x_0^*)S_0(g_0,0)=x^*_0(T_Ag_0)=\|T_Ag_0\|>1-\eta(\varepsilon_0).
$$
By Lemma~\ref{thm:stability}, $L_1(\mu|_A)\oplus_1 X$ has the BPBp-$\nuu$ with the function $\eta$. Therefore, there exist $S_1\in {\mathcal{L}(L_1(\mu|_A)\oplus_1 X)}$, $(g_1,x_1)\in S_{L_1(\mu|_A)\oplus_1 X}$ and $(g^*_1,x^*_1)\in S_{L_\infty(\mu|_A)\oplus_\infty X^*}$ such that
$$
\|(g_1,x_1)-(g_0,0)\|<\eps,\quad \|(g^*_1,x^*_1)-(f_0^*,x_0^*)\|<\eps, \quad \|S_1-S_0\|<\varepsilon,
$$
$$\inner{(g^*_1,x^*_1),(g_1,x_1)}=1 \quad \text{~and~} \quad |(g^*_1,x^*_1)S_1(g_1,x_1)|=v(S_1)=1.
$$

\emph{Claim $1$}. We claim that $x_1=0$. \\
Otherwise,
   \begin{align*}
   1=\re\inner{(g^*_1,x^*_1),(g_1,x_1)}& =\|g_1\|~\re\inner{(g^*_1,x^*_1), \frac{(g_1,0)}{\|g_1\|}} + \|x_1\|~\re\inner{(g^*_1,x^*_1), \frac{(0,x_1)}{\|x_1\|} }\leq 1.
   \end{align*}
We deduce that
$$
\left(\frac{(g_1,0)}{\|g_1\|},(g^*_1,x^*_1)\right), \ \left(\frac{(0,x_1)}{\|x_1\|},(g^*_1,x^*_1)\right)\in \Pi(L_1(\mu|_A)\oplus_1 X).
$$
Since $\left\|S_1(\tfrac{(0,x_1)}{\|x_1\|})\right\| = \left\|(S_1-S_0)\left(\tfrac{(0,x_1)}{\|x_1\|}\right)\right\|<\eps$, we get that
 \begin{align*}1 &= | \inner{(g^*_1,x^*_1), S_1(g_1,x_1)}| \\ &= \left|\|g_1\|\inner{(g^*_1,x^*_1), S_1\left(\frac{(g_1,0)}{\|g_1\|}\right)} + \|x_1\|\inner{(g^*_1,x^*_1), S_1\left(\frac{(0,x_1)}{\|x_1\|}\right)}\right|\\
& \leq \|g_1\|v(S_1)+\eps\|x_1\|< \|g_1\|+ \|x_1\|=1,
\end{align*} a contradiction. This proves the claim.

We define the operator $S_2~:~L_1(\mu|_A)\oplus_1X\longrightarrow L_1(\mu|_A)\oplus_1X$ by $S_2(f,x)=S_1(f,0)$ for every $f\in L_1(\mu|_A)$ and for every $x\in X$. Then we have
$$
v(S_2) =|(g^*_1,x^*_1)S_2(g_1,0)|=1 \ \ \text{and}\ \ \| S_1 - S_2\|\leq \eps.
$$
Indeed, from Claim~1, we have
$$
v(S_1)=|(g^*_1,x^*_1)S_1(g_1,x_1)|=|(g^*_1,x^*_1)S_1(g_1,0)|=
|(g^*_1,x^*_1)S_2(g_1,0)|\leq v(S_2).
$$
On the other hand, we have that
$$
|(f^*,x^*)S_2(f,x)|=|(f^*,x^*)S_1(f,0)|\leq \|f\| v(S_1) \leq v(S_1)
$$
for every $((f^*,x^*),(f,x))\in \Pi(L_1(\mu|_A)\oplus_1 X)$. So $v(S_2)\leq v(S_1)$. Also, $$
\|S_1-S_2\|\leq \sup_{x\in S_X}\|S_1(0,x)\|=\sup_{x\in S_X}\|S_1(0,x)-S_0(0,x)\|\leq \varepsilon.
$$
 \emph{Claim $2$}. There exists an operator $S_3~:~L_1(\mu|_A)\oplus_1 X\longrightarrow L_1(\mu|_A)\oplus_1 X$ such that $\|S_3(g_1,0)\|=\|S_3\|=1$, $S_3(0,x)=0$, $S_3(f,x)\in \{0\}\oplus_1 X$ for every $(f,x)\in L_1(\mu|_A)\oplus_1 X$ and $\|S_3-S_2\|<4\varepsilon$.

Indeed, using the trivial decomposition, write $S_1=(D_1,D_2)$, where $D_1:L_1(\mu|_A)\oplus_1 X\longrightarrow L_1(\mu|_A)$ and $D_2:L_1(\mu|_A)\oplus_1 X\longrightarrow X$. We have that
\begin{align*}
\sup\bigl\{ &|g^*D_1 (g_1,0)+x^*D_2(g_1,0)| \,:\, x^*\in S_{X^*}, \inner{g^*, g_1}=1, g^*\in S_{L_\infty(\mu|_A)}\bigr\} \\
&= \sup \bigl\{ |g^*D_1 (g_1,0)| +\|D_2(g_1,0)\|  \,:\, \inner{g^*, g_1}=1, g^*\in S_{L_\infty(\mu|_A)}\bigr\} \\
&= \sup \bigl\{ |g^*D_1 (g_1,0)|   \,:\, \inner{g^*, g_1}=1, g^*\in S_{L_\infty(\mu|_A)}\bigr\} \ +\  \|D_2(g_1,0)\|\\
&\leq v(S_2)=\bigl|(g^*_1,x^*_1)S_2(g_1,0)\bigr|= \bigl|g^*_1D_1(g_1,0)+x^*_1D_2(g_1,0)\bigr|.
\end{align*}
This implies that $$|x^*_1D_2(g_1,0)|=\|D_2(g_1,0)\|$$
and
\begin{align*}
 |g^*_1D_1(g_1,0)| = \sup \{ |g^*D_1 (g_1,0)|   : \inner{g^*, g_1}=1, g^*\in L_\infty(\mu|_A)\}.
\end{align*}
Therefore, $|g_1^*|$ equals $1$ on the support of $D_1(g_1,0)$. As $|\inner{g_1^*,g_1}|=1$, we also have that $|g_1^*|$ equals $1$ on the support of $g_1$. Changing the values of $g_1^*$ by the ones of $f_0^*$ on $A\setminus \bigl({\rm supp}( D_1(g_1,0))\cup {\rm supp}(g_1)\bigr)$, we may and do suppose that $|g_1^*|=1$ on the whole $A$.

We also have $\|D_2(g_1, 0)\|>0$  Indeed,
\begin{align*}
\|S_2(g_1, 0) - S_0(g_0, 0)\| &\leq \|S_2(g_1, 0) - S_0(g_1, 0)\| + \|S_0(g_1, 0) - S_0(g_0, 0)\|\\
&< 2\eps+\eps = 3\eps.
\end{align*}
So we have
\begin{align*}
\|D_2(g_1,0)-T_Ag_0\|& \leq \|D_1(g_1,0)\|+\|D_2(g_1,0)-T_Ag_0\| \\&=
\|(D_1(g_1, 0), D_2(g_1,0) ) - (0, T_Ag_0)\|\\
&=\|S_2(g_1, 0) - S_0(g_0, 0)\| <3\eps
\end{align*}
and $\|D_2(g_1, 0)\|> \|T_Ag_0\|-3\eps\geq 1-\eta(\eps_0)-3\eps>0$.

Finally define the operator $S_3$ by
$$
S_3(f,x)=\left(0,D_2(f,0)+g^*_1(D_1(f,0)) \tfrac{D_2(g_1,0)}{x^*_1D_2(g_1,0)}\right)\ \  \  \ \text{~for~}\ \  (f,x)\in L_1(\mu|_A)\oplus_1 X.
$$
It is clear that $\|S_3\|\leq \sup_{f\in S_{L_1(\mu|_A)}} (\|D_2(f,0)\|+|g^*_1D_1(f,0)|)$. Notice also that
\begin{align*}
\|D_1(f,0)\| &\leq \|D_1(f,0)\|+\|D_2(f,0)-T_Af\|\\ &=
 \|(D_1(f, 0), D_2(f,0) ) - (0, T_Af)\|\\ &=\|S_2(f, x) - S_0(f, x)\|
 \end{align*}
for all $(f, x)\in L_1(\mu|_A)\oplus_1 X$. Hence we have
 $$
 \|S_3 - S_2\|=2\sup_{f\in S_{L_1(\mu|_A)}}\|D_1(f,0)\|   \leq 2\|S_2-S_0\|<4\eps.
 $$
On the other hand, let $G~:~ L_1(\mu|_A) \longrightarrow L_1(\mu|_A)$ be defined by $G(f)=\overline{g^*_1}f$ for every $f\in L_1(\mu|_A)$. Then, we have
\begin{align*}
v(S_2)
&=\sup\{|z^*S_2z|~:~(z,z^*)\in\Pi(L_1(\mu|_A)\oplus_1 X)\}\\
&\geq \sup\left\{ \left|x^*D_2\left(G\left(\frac1{\mu(C)}\chi_{C}\right),0\right)+g^*_1D_1\left(G\left(\frac1{\mu(C)}\chi_{C}\right),0\right)\right|~:~{x^*\in S_{X^*}},~C\in \Sigma_A, \mu(C)>0\right\}\\
&=\sup\left\{ \left\|D_2\left(G\left(\frac1{\mu(C)}\chi_{C}\right),0\right)\right\|+\left|g^*_1D_1\left(G\left(\frac1{\mu(C)}\chi_{C}\right),0\right)\right|~:~C\in  \Sigma_A, \mu(C)>0\right\},
\end{align*} where $\Sigma_A $ is the family of measurable subsets of $A$.

Hence, for any simple function $s=\sum_{i=1}^n \frac{\alpha_i}{\mu(A_i)}\chi_{A_i}\in S_{L_1(\mu|_A)}$, where $\{A_i\}_i$ is a family of disjoint measurable subsets with strictly positive measure, we have
\begin{align*}
v(S_2)
 &\geq\sum_{i=1}^n|\alpha_i| \left(\left\|D_2\left(G\left(\frac{1}{\mu(A_i)}\chi_{A_i}\right),0\right)\right\| +\left|g^*_1D_1\left(G\left(\frac{1}{\mu(A_i)}\chi_{A_i}\right),0\right)\right|\right)\\
 &\geq\|D_2(G(s),0)\|+|g^*_1D_1(G(s),0)|.
\end{align*}
Since $|g^*_1|=1$, $G$ is an isometric isomorphism, so for each $f\in S_{L_1(\mu|_A)}$ there exists a sequence of norm-one simple functions $(s_k)$ such that $G(s_k)$ converges to $f$. Therefore, $$
v(S_2) \geq \sup_{f\in S_{L_1(\mu|_A)}} \bigl(\|D_2(f,0)\|+|g^*_1D_1(f,0)|\bigr)\geq \|S_3\|.
$$
On the other hand, we have that
\begin{align*}
\|S_3\|
\geq |(g^*_1,x^*_1)S_3(g_1,0)|
=|x^*_1D_2(g_1,0)+g^*_1D_1(g_1,0)|=v(S_2)=1.
\end{align*}
Therefore, $1=\|S_3\|=\|S_3(g_1,0)\|$ which proves Claim ~2.\\

Finally, we write $S_3 = (0, \tilde{T})$ for a suitable $\tilde{T}: L_1(\mu|_A)\oplus_1X \longrightarrow X$  and we define the operator $T_1~:~L_1(\mu)\longrightarrow X$ by
$$
   T_1(f)=T_0(f\chi_{A^c}) + \tilde{T}(P_Af, 0) \ \ \ \text{for every} \ \ f\in L_1(\mu).
$$
Then, we have
$$
\|T_1(f)\|\leq \|T_0\|\|f\chi_{A^c}\| + \|\tilde{T}\|\|f\chi_A\|=\|f\|
$$
for every $f\in L_1(\mu)$, so $\|T_1\|\leq 1$. Also,
$$
\|T_1(J_Ag_1)\|=\|S_3(g_1,0)\|=\|S_3\|=1,
$$
so $T_1$ attains its norm on $J_Ag_1\in L_1(\mu)$, and
$$
\|J_Ag_1-f_0\|=\|g_1-g_0\|<\eps.
$$
We also have that for any $f\in S_{L_1(\mu)}$,
\begin{align*}
\|T_0(f)-T_1(f)\| & = \|T_0(f\chi_A ) - \tilde{T}(P_A f, 0) \|\\
&\leq \|T_0(J_AP_Af) - T_A(P_Af) \| + \|T_A(P_Af) -  \tilde{T}(P_A f, 0) \|\\
&\leq \|T_0J_A - T_A\| + \|S_0 - S_3\|\\
&<\eta(\eps_0) + 6\eps.
\end{align*} Hence $\|T_0 - T_1\| \leq \eta(\eps_0) + 6\eps <\eps_0$. This completes the proof.
\end{proof}

{\bf Conflict of Interests.}
The authors declare that there is no conflict of interests regarding the publication of this article.

\section*{Acknowledgment} We thank Gilles Godefroy and Rafael Pay\'{a} for fruitful conversations about the content of this paper, and Vladimir Kadets for providing Lemma~\ref{lemma-vladimir}. We also appreciate anonymous referees for careful reading and fruitful suggestions about revision.

\end{document}